\numberwithin{equation}{section}
\newtheorem{thm}{Theorem}[section]
\newtheorem{cor} [thm]{Corollary}
\newtheorem{lemma} [thm]{Lemma}
\theoremstyle{definition}
\renewcommand\leq{\leqslant} 
\renewcommand\geq{\geqslant}
\DeclareMathOperator{\End}{End}
\DeclareMathOperator{\GL}{GL}
\DeclareMathOperator{\M}{M}
\DeclareMathOperator{\J}{J}
\DeclareMathOperator{\HS}{HS}
\DeclareMathOperator{\Suz}{Suz}
\DeclareMathOperator{\McL}{McL}
\DeclareMathOperator{\Ru}{Ru}
\DeclareMathOperator{\He}{He}
\DeclareMathOperator{\Ly}{Ly}
\DeclareMathOperator{\ON}{O'N}
\DeclareMathOperator{\Co}{Co}
\DeclareMathOperator{\Fi}{Fi}
\DeclareMathOperator{\HN}{HN}
\DeclareMathOperator{\Th}{Th}
\DeclareMathOperator{\Baby}{\mathbb{B}}
\DeclareMathOperator{\Mon}{\mathbb{M}}
\DeclareMathOperator{\Aut}{Aut}
\DeclareMathOperator{\Out}{Out}
\title{Regular orbits of sporadic simple groups}
\author{Joanna B. Fawcett}
\address[Fawcett]{Department of Mathematics,  Imperial College London, South Kensington Campus, 
London, SW7 2AZ, United Kingdom}
\email{j.fawcett@imperial.ac.uk}
\author{J\"urgen M\"uller}
\address[M\"uller]{ 
Fakult\"at f\"ur Mathematik und Naturwissenschaften,
Bergische Universit\"at, Wuppertal, Germany}
\email{juergen.mueller@math.uni-wuppertal.de}
\author{E.\ A.\ O'Brien}
\address[O'Brien]{Department of Mathematics, University of Auckland, Auckland, 
New Zealand}
\email{e.obrien@auckland.ac.nz}
\author{Robert A.\ Wilson}
\address[Wilson]{School of Mathematical Sciences, Queen Mary, University of London, 
Mile End Road, London, E1 4NS, United Kingdom}
\email{r.a.wilson@qmul.ac.uk}
\keywords{regular orbit; base size; sporadic simple group; primitive affine group}
\thanks{Fawcett was supported by the Australian Research Council  Discovery Project 
grant DP130100106, the London Mathematical Society, and  the European Union's Horizon 2020 research and innovation programme under the Marie Sk\l{}odowska-Curie grant agreement No.\ 746889”. O'Brien was supported 
by a University of Auckland Hood Fellowship. Both they and Wilson were 
supported by the Marsden Fund of New Zealand via grant UOA 1323. We thank the referees for their  helpful and insightful comments.}
\begin{document}

\begin{abstract}
 Given a finite group $G$ and a faithful irreducible $FG$-module $V$ where $F$ has prime order, does $G$ have a regular orbit on  $V$? This problem is equivalent to determining which primitive permutation groups of affine type have a base of size 2. 
Let $G$ be a covering group of an almost simple group whose socle $T$ is   
sporadic, and let $V$ be a faithful irreducible $FG$-module 
where $F$ has prime order dividing $|G|$.
We classify the pairs $(G,V)$ for which $G$ has no regular orbit on $V$,  
and determine the minimal base size of $G$ in its action on $V$.
To obtain this classification, 
for each non-trivial $g\in G/Z(G)$, we compute  the minimal number of 
$T$-conjugates of $g$  generating  $\langle T,g\rangle$.  \end{abstract}

\maketitle

%%%%%%%%%%%%%%%%%%%%%%%%%%%%%%%%%%%%%%%%%%%%%%%%%%%%%%%%%%%%%%%%%%%%%%%%%%%%%
\section{Introduction}
\label{s:intro}

A \textit{base} $B$ for a group $X$ acting faithfully on a finite set $\Omega$ is a  subset of $\Omega$ with the property that only the identity of $X$ fixes every element of $B$. The \textit{base size} of $X$, denoted by $b(X)$,  is the minimal cardinality of a base for $X$. Recently, much work has been done 
to classify the finite primitive permutation groups of almost simple, diagonal and twisted wreath type  with base size 2 (see
\cite{BurGurSax2011,BurGurSaxS,BurObrWil2010,Faw2013,FawPhD}). 
For groups of affine type, this problem 
is equivalent to the regular orbit problem for fields with prime order.

Given a finite group $G$, a field $F$ and a faithful $FG$-module $V$, we say that $G$ has a \textit{regular orbit}  on $V$ if there exists $v\in V$ such that only the identity of $G$ fixes $v$; in other words, $\{v\}$ is a base for $G$. Hall, Liebeck and Seitz \cite[Theorem 6]{HalLieSei1992} proved   that  if $G$ is a finite quasisimple group with no regular orbit on a faithful irreducible $FG$-module $V$ where $F$ is a field of characteristic $p$, then either $G$ is of Lie type in characteristic $p$, or $G= A_n$ where $p\leq n$ and $V$ is the fully deleted permutation module, or $(G,V)$ is one of finitely many exceptional pairs. While these exceptional pairs are unknown in general, they have been determined when $F$ is the field $\mathbb{F}_p$ of  order $p$ and either $p\nmid |G|$ 
(see \cite{Goo2000,KohPah2001}), or $p\mid |G|$ and $G/Z(G)=A_n$
(see~\cite{FawObrSax2016}).  In this paper, we consider the case where  $G/Z(G)$ is a  sporadic simple group whose order is divisible by $p$. We also consider the covering groups of the automorphism groups of the sporadic groups, and for those groups $G$ with no regular orbit on $V$, we determine the base size of $G$ in its action on $V$.

\begin{thm}
\label{regular}
Let $G$ be a covering group  of an almost simple group whose socle is sporadic.  
Let $V$ be a faithful irreducible $\mathbb{F}_pG$-module where 
$p$ is a prime dividing $|G|$.  If $G$ has no regular orbit on $V$, 
then $(G,p,\dim_{\mathbb{F}_p}(V),b(G))$ is listed in Table $\ref{tab:totalex}$.
\end{thm}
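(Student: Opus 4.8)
The plan is to reduce the regular orbit question to a counting problem about fixed points and then resolve that problem by a mixture of character-theoretic estimates and explicit computation. First I would record the basic dictionary: for $v\in V$, the stabiliser $G_v$ is trivial precisely when $v$ avoids the fixed-point subspace $C_V(g)$ for every element $g$ of prime order in $G$ (it suffices to check elements of prime order, and modulo $Z(G)$ it suffices to check representatives of conjugacy classes of such elements, since $Z(G)$ acts fixed-point-freely on a faithful irreducible module). Hence $G$ has a regular orbit on $V$ as soon as $\sum_{g} |C_V(g)| < |V|$, where the sum runs over a suitable transversal of prime-order elements; writing $|C_V(g)| = p^{\dim C_V(g)}$ and using $\dim C_V(g) = \tfrac{1}{|\langle g\rangle|}\sum_{i} \dim C_V(g^i)$ together with Brauer-character values, this becomes an inequality that can be checked class by class once the relevant Brauer character degrees and values are known. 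For most sporadic groups $T$, most primes $p\mid|T|$, and most irreducible modules $V$ the dimension of $V$ is large enough relative to $\max_g \dim C_V(g)$ and the number of classes that this crude bound already gives a regular orbit; the residual finite list of pairs $(G,V)$ is then handled individually.

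The second ingredient, and the one the abstract flags as the novel computation, is the following: for each nontrivial $g \in G/Z(G)$ one needs the minimal number $r = r(g)$ of $T$-conjugates of $g$ that generate $\langle T, g\rangle$. The relevance is that if $g_1,\dots,g_r$ are conjugates of a prime-order element $g$ generating $G$, then a vector lying outside $C_V(g_1)\cup\cdots\cup C_V(g_r)$ has trivial stabiliser provided $r$ is small and the codimensions of the $C_V(g_i)$ are not too small; more precisely $b(G)$ is sandwiched by these $r$-values and by the fixed-space codimensions, which both pins down the base size in the exceptional cases and, conversely, certifies a regular orbit (base size $2$, i.e.\ $b(G)=2$ would mean a regular orbit; when $b(G)>2$ we are in Table~\ref{tab:totalex}) whenever the union of any two translated fixed subspaces cannot cover $V$. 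I would compute $r(g)$ for each class using the known maximal subgroup structure of the sporadic almost simple groups: $g$-conjugates fail to generate $\langle T,g\rangle$ exactly when they all lie in a common maximal subgroup $M$, so $r(g)$ is governed by the maximal overgroups of $g$, and the count is obtained either by a short probabilistic/Möbius argument on the subgroup lattice or, for the hardest groups, by direct machine computation in a suitable representation.

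The main obstacle will be the sheer size of the case analysis for the largest sporadic groups — the Monster, the Baby Monster, $\mathrm{Fi}_{24}'$, and their covers and automorphic extensions — where neither the full Brauer character tables nor explicit module constructions are uniformly available, and where establishing that no pair $(G,V)$ with $V$ of small dimension has been missed requires care about which irreducible modules in cross-characteristic and defining characteristic genuinely occur. My strategy there would be: (i) bound $\dim V$ below using minimal faithful degrees of covering groups (so that only finitely many $(p,V)$ survive the crude inequality), (ii) for the surviving small modules, either locate an explicit matrix representation to compute all $\dim C_V(g)$ directly, or bound these dimensions via $2$-local or $p$-local subgroup restrictions and permutation-module arguments, and (iii) confirm the computed $r(g)$ values against the ATLAS maximal-subgroup data, so that the entries of Table~\ref{tab:totalex} and the claim of no other exceptions are both verified. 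Routine but voluminous: assembling, for every remaining pair, the list of fixed-space dimensions on prime-order classes and checking the one displayed counting inequality.
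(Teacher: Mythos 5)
Your overall skeleton (fixed-point counting, $r(g)$, comparison with minimal faithful degrees, machine computation for the survivors) does match the paper, but the step where $r(g)$ enters is stated incorrectly, and the correct statement is the linchpin of the whole reduction. You claim that if $g_1,\dots,g_r$ are conjugates of a prime-order element $g$ generating the group, then any vector outside $C_V(g_1)\cup\cdots\cup C_V(g_r)$ has trivial stabiliser; this is false, since such a vector can be fixed by elements of other classes, or by conjugates of $g$ not among the $g_i$. The actual mechanism is an intersection, not a union: if $g_1,\dots,g_{r(g)}$ are $T$-conjugates of $g$ generating $\langle T,g\rangle$, then on a faithful irreducible module the common fixed space $\bigcap_i C_V(g_i)$ is zero, whence $\dim C_V(g)\leq d-d/r(g)$ where $d=\dim_{\mathbb{F}_p}(V)$ (this is \cite[Lemma 3.4]{FawObrSax2016}, used in Lemma~\ref{general bound}). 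Substituting this \emph{module-independent} bound into the counting inequality $p^{d}\leq\sum_{g\in X}|g^G|\,|C_V(g)|$ is precisely what produces a bound $d\leq u(G,p)$ depending only on class sizes and the $r(g)$; without it your ``crude inequality'' cannot bound $\dim V$ at all for the many groups ($\Mon$, $\Baby$, $\Fi_{24}'$, $\J_4$, $\Co_1$, $\dots$) whose Brauer character tables are unknown or unwieldy, and the reduction to a finite list collapses. Two related slips: a regular orbit means $b(G)=1$ (equivalently $b(V{:}G)=2$), not $b(G)=2$, and the exceptional pairs of Table~\ref{tab:totalex} are those with $b(G)\geq 2$ --- most entries have $b(G)=2$, so your criterion ``$b(G)>2$ puts us in the table'' would misclassify almost every exceptional case; also $\dim C_V(g)=\frac{1}{|\langle g\rangle|}\sum_i\dim C_V(g^i)$ is not an identity --- you presumably intend the Brauer-character average $\dim C_V(g)=\frac{1}{|g|}\sum_i\beta(g^i)$, valid for $p'$-elements.

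Two further points. After comparing $u(G,p)$ with $m(G',p)$ from \cite{Jan2005}, the hard residual cases are not the Monster-sized groups (those are eliminated by that comparison) but the four ``dimension gaps'' $(\J_4,2)$, $(\Co_1,2)$, $(2.\Co_1,3)$, $(2.\Co_1,5)$, where $u(G,p)\geq m(G',p)$ and no Brauer table exists; the paper needs the bespoke restriction/amalgam arguments of \S\ref{s:dims} (via $3.\Suz{:}2$ and $3^6{:}2.\M_{12}$ inside $\Co_1$, $2^{12}{:}\M_{24}$ inside $2.\Co_1$, and a detailed submodule analysis for $\J_4$) to pin down the admissible small modules --- your item (ii) gestures at this but supplies no argument. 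Finally, determining $b(G)$ for the surviving pairs is not automatic from the ``sandwich'' you describe: the usable lower bound is $|G|\leq(|V|-1)^{b(G)}$ (Lemma~\ref{lowerbound}) and the upper bound is the counting inequality applied to $V^n$ (Lemma~\ref{strong bound}); when these do not meet, substantial orbit enumerations are needed, most delicately for $(2.\Co_1,3,24)$ and $(2.\Co_1,7,24)$ (Lemmas~\ref{co1mod3} and~\ref{co1mod7}). So the plan has the right shape, but the lemma connecting $r(g)$ to fixed-space codimension must be corrected, and the regular-orbit/base-size dictionary fixed, before the reduction and the classification go through.
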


If there are exactly $m$ faithful irreducible $\mathbb{F}_pG$-modules with dimension $d$ on which $G$ has no regular orbit and $m>1$, then we write  $d^{(m)}$  in
Table~\ref{tab:totalex}. Except for the case $(G,p,\dim_{\mathbb{F}_p}(V))=(\M_{11},3,10)$, this is sufficient to identify the non-regular modules. 
However, there are three faithful irreducible $\mathbb{F}_3\M_{11}$-modules of dimension $10$, only one of which has base size $2$: the $\mathbb{F}_3\M_{11}$-module  with the property that an involution of $\M_{11}$, viewed as an element of $\GL_{10}(3)$, has trace $-1\in\mathbb{F}_3$.

There are $\mathbb{F}_pG$-modules in Table~\ref{tab:totalex}
(indicated by ${}^\sharp$) that are 
not absolutely irreducible; these all split into 
absolutely irreducible $\mathbb{F}_{p^2}G$-modules.
In particular, when $G=3.\J_3$, there is a unique faithful irreducible 
but not absolutely irreducible $\mathbb{F}_2G$-module with dimension $18$,
corresponding to the two absolutely irreducible $\mathbb{F}_4G$-modules 
with dimension $9$ that cannot be realised over $\mathbb{F}_2$; 
this module should not be confused with the four faithful absolutely irreducible
$\mathbb{F}_4G$-modules with dimension $18$ 
that cannot be realised over $\mathbb{F}_2$ (see~\cite{BAtlas}).

\begin{table}[!ht]
\renewcommand{\baselinestretch}{1.1}\selectfont
\centering
\begin{tabular}{ l l l l }
\hline
 $G$ &  $p$ & $\dim_{\mathbb{F}_p}(V)$ & $b(G)$   \\
\hline 
$\M_{11}$ & $2$ & $10$ & $2$ \\
& $3$ & $5^{(2)}$,  $10$ & $2$ \\
$\M_{12}$ & $2$ & $10$ & $3$ \\
& $3$ & $10^{(2)}$ & $2$ \\
$\M_{12}{:}2$ & $2$ & $10$ & $3$ \\
$2.\M_{12}$ & $3$ & $6^{(2)}$ & $3$ \\
& & $10^{(2)}$ & $2$ \\
$2.\M_{12}.2^+$ & $3$ & $12$, $10^{(4)}$ & $2$ \\
$2.\M_{12}.2^-$ & $3$ & $12$ & $2$\\
$\M_{22}$ & $2$ & $10^{(2)}$ & $3$ \\
$\M_{22}{:}2$ & $2$ & $10^{(2)}$ & $3$\\
$3.\M_{22}$ & $2$ & $12^\sharp$ & $3$ \\
$\M_{23}$ & $2$ & $11^{(2)}$ & $3$ \\
$\M_{24}$ & $2$ & $11^{(2)}$ & $3$ \\
$\J_1$ & $2$ & $20$ & $2$ \\
$\J_2$ & $2$ & $12^\sharp$ & $2$\\
$\J_2{:}2$ & $2$ & $12$ & $2$\\
$2.\J_2$ & $3$ & $12^\sharp$, $14$ & $2$ \\
& $5$ & $6$ & $2$ \\
$2.\J_2.2^+$ & $3$ & $12$ & $2$ \\
$2.\J_2.2^-$ & $3$ & $12,14^{(2)}$ & $2$ \\
$3.\J_3$ & $2$ & $18^\sharp$ & $2$ \\
   \hline
\end{tabular}
\quad
\begin{tabular}{ l l l l }
\hline
 $G$ &  $p$ & $\dim_{\mathbb{F}_p}(V)$  & $b(G)$  \\
\hline 
$\HS$ & $2$ & $20$ & $2$ \\
$\HS{:}2$ & $2$ & $20$ & $2$ \\
$\McL$ & $2$ & $22$ & $2$ \\
& $3$ & $21$ & $2$ \\
$\McL{:}2$ & $2$ & $22$ & $2$\\
& $3$ & $21^{(2)}$ & $2$ \\
$\Ru$ & $2$ & $28$ & $2$ \\
$2.\Suz$ & $3$ & $12$ & $3$\\
$2.\Suz.2^+$ & $3$ & $12^{(2)}$ & $3$ \\
$2.\Suz.2^-$ & $3$ & $24^\sharp$ & $2$\\
$3.\Suz$ & $2$ & $24^\sharp$ & $2$ \\
$6.\Suz$ & $7$ & $12^{(2)}$  & $2$\\
& $13$ & $12^{(2)}$ & $2$ \\
$\Co_3$ & $2$ & $22$ & $2$ \\
& $3$ & $22$ & $2$ \\
$\Co_2$ & $2$ & $22$ & $3$ \\
& $3$ & $23$ & $2$ \\
$\Co_1$ & $2$ & $24$ & $3$ \\
$2.\Co_1$ & $3$ & $24$ &  $2$ \\
& $5$ & $24$ & $2$ \\
& $7$ & $24$ & $2$ \\
& & & \\
\hline
\end{tabular}
\caption{$\mathbb{F}_pG$-modules $V$ on which $G$ has no regular orbit}
\label{tab:totalex}
\end{table}

A finite primitive permutation group $X$ is of  \textit{affine type} if its socle $V$ is an $\mathbb{F}_p$-vector space for some  prime $p$, in which case $X=V{:}X_0$ and $V$ is a faithful irreducible $\mathbb{F}_pX_0$-module, 
 where $X_0$ denotes the stabiliser of the vector $0$ in $X$. Now $b(X)=b(X_0)+1$,  
so classifying the primitive permutation groups of affine type with a  base of size 2 
amounts to determining which finite   groups $G$, primes $p$, and faithful 
irreducible  $\mathbb{F}_pG$-modules $V$  are 
such that $G$ has a regular orbit on~$V$.
Thus, as an immediate consequence of Theorem~\ref{regular}, we  obtain the following.

\begin{cor}
Let $X$ be a primitive permutation group of affine type with socle $V\simeq \mathbb{F}_p^d$ where $p$ is a prime dividing $|X_0|$ and $X_0$ is a covering group of an almost simple group whose socle is  sporadic. If $b(X)>2$, then $(G,p,d)$ is    listed in
Table $\ref{tab:totalex}$ where $G=X_0$, and $b(X)=b(G)+1$. In particular, $b(X)\leq 4$.
\end{cor}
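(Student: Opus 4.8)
The plan is to derive the corollary directly from Theorem~\ref{regular} together with the standard structure theory of affine primitive groups. First I would recall that $X$ being of affine type means $X=V{:}X_0$, where $V\cong\mathbb{F}_p^d$ is the (elementary abelian) socle and $X_0$ is the stabiliser of $0$, acting faithfully and irreducibly on $V$ as an $\mathbb{F}_pX_0$-module; this is exactly the setup described before the corollary. Since $V$ acts on itself transitively by translation and these translations lie in $X$, every base of $X$ can be translated to one containing $0$, and the pointwise stabiliser in $X$ of a set $\{0\}\cup B'$ with $B'\subseteq V$ equals the pointwise stabiliser of $B'$ in $X_0$. Hence a set $\{0\}\cup B'$ is a base for $X$ if and only if $B'$ is a base for $X_0$, which gives the identity $b(X)=b(X_0)+1$ recorded in the text.

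Next I would combine this identity with Theorem~\ref{regular}. Writing $G=X_0$, the hypotheses of the corollary say precisely that $G$ is a covering group of an almost simple group with sporadic socle, that $V$ is a faithful irreducible $\mathbb{F}_pG$-module, and that $p$ divides $|G|$, so Theorem~\ref{regular} is applicable. The condition $b(X)>2$ is equivalent, via $b(X)=b(G)+1$, to $b(G)>1$, which means exactly that $G$ has no regular orbit on $V$. Theorem~\ref{regular} then asserts that $(G,p,\dim_{\mathbb{F}_p}(V),b(G))$ occurs in Table~\ref{tab:totalex}; in particular $(G,p,d)$ with $d=\dim_{\mathbb{F}_p}(V)$ is listed there, and reading the last column of the table gives $b(G)$, whence $b(X)=b(G)+1$.

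Finally, for the assertion $b(X)\leq 4$, I would simply inspect Table~\ref{tab:totalex} and observe that the value of $b(G)$ never exceeds $3$ (the value $3$ being attained, for instance, for $\M_{12}$, $\M_{22}$, $\M_{23}$, $\M_{24}$, $\Co_2$, $\Co_1$, $2.\Suz$); thus $b(X)=b(G)+1\leq 4$ whenever $b(X)>2$, and $b(X)=2$ otherwise. The main point to note is that there is essentially no obstacle here: the entire content is carried by Theorem~\ref{regular}, and what remains is the routine reduction through the identity $b(X)=b(X_0)+1$ and a direct reading of the table.
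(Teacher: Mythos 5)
Your proposal is correct and follows the paper's own route: the corollary is deduced immediately from Theorem~\ref{regular} via the standard identity $b(X)=b(X_0)+1$ for affine primitive groups (which you justify by the translation argument), together with reading off $b(G)\leq 3$ from Table~\ref{tab:totalex}. No gaps.
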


The proof of Theorem \ref{regular} proceeds as follows. If $G$ has no regular orbit on $V$, then the dimension of $V$ is bounded above by some  integer  $u(G,p)$ (see Lemma \ref{general bound}). If $u(G,p)$ is less than the minimal dimension $m(G',p)$ of a faithful irreducible representation of the derived subgroup $G'$ in characteristic $p$ (given by \cite{Jan2005}), then we have a contradiction.  Otherwise, in most cases, the $p$-modular Brauer character table of $G$ is known, so we can determine the possible dimensions for $V$. 
We then use a variety of computational techniques in  {\sf GAP}~\cite{GAP4} and 
{\sc Magma}~\cite{Magma} to 
determine the base size $b(G)$ of $G$ in its action of $V$. For those cases where the $p$-modular Brauer character table of $G$ is not known---namely  when $(G,p)$ is one of $(\J_4,2)$, $(\Co_1,2)$, $(2.\Co_1,3)$ or  $(2.\Co_1,5)$---we use other methods to determine the possible dimensions for $V$  (see \S\ref{s:dims}).

The upper bound $u(G,p)$ is defined in terms of a well-known parameter.
Let $G$ be an almost simple group with socle $T$, and for each non-trivial $g\in G$, define $r(g)$ to be the minimal number of $T$-conjugates of $g$ that generate $\langle T,g\rangle$.  When $T$ is sporadic,  upper bounds on $r(G):=\max{\{r(g):g\in G\setminus \{1\}\}}$ were determined 
in \cite{GurSax2003} (see \cite[Table 1]{GurSax2003} and the proof 
of \cite[Lemma 7.6]{GurSax2003}), but these are not always sufficient for our purposes. To determine  the best possible bound on the dimension of $V$,  we compute the exact values of the  $r(g)$ and record these in the following theorem. In particular, this result considerably improves the upper bounds of \cite{GurSax2003} on $r(G)$ and may be of independent interest.

\begin{thm}
\label{thm:rg}
Let $G$ be an almost simple group whose socle  is sporadic, 
and let $g\in G$ be non-trivial.
Either $g^2=1$ and $r(g)=3$, or $g^2\neq 1$ and $r(g)=2$, 
or the class name of $g$ is listed in Table~$\ref{tab:rg}$.
\end{thm}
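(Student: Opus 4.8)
The plan is to reduce Theorem~\ref{thm:rg} to a finite computation, case by case over the finitely many almost simple groups $G$ with sporadic socle $T$, and then carry out that computation in a way that is both feasible and verifiable. The claim has the shape ``$r(g)$ is as small as it could possibly be, except for a short explicit list'', so the work splits into two halves: establishing the generic lower bounds ($r(g)\geq 3$ when $g^2=1$, and $r(g)\geq 2$ always), and then computing the exact value of $r(g)$ for every conjugacy class of every such $G$, reading off the exceptions.

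First I would dispose of the lower bounds, which are elementary. For any non-trivial $g$ we have $r(g)\geq 2$ simply because a single conjugate generates only a cyclic (hence proper, since $T$ is non-abelian simple) subgroup, or $\langle g\rangle$ which cannot contain $T$. For the involution case, if $g^2=1$ then any two conjugates $g^{x},g^{y}$ generate a dihedral group, which is solvable, so cannot contain the non-solvable group $T$; hence $r(g)\geq 3$ whenever $g$ is an involution. The content of the theorem is therefore the matching upper bounds: for non-involutions one must exhibit (for each class not in Table~\ref{tab:rg}) two conjugates generating $\langle T,g\rangle$, and for involutions not in the table, three conjugates; and for the classes appearing in Table~\ref{tab:rg} one must both exhibit a generating set of the stated size and prove no smaller one exists.

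The core computational engine is standard: given $G$ (from the {\sf GAP} character table library together with an explicit faithful permutation or matrix representation from the Atlas \cite{BAtlas} or from {\sc Magma}~\cite{Magma}), and a representative $g$ of a class, one searches for elements $x_{1},\dots,x_{k}\in T$ with $\langle g^{x_{1}},\dots,g^{x_{k}}\rangle = \langle T,g\rangle$. To show $r(g)\leq k$ it suffices to find one such tuple, which a short randomised search accomplishes very quickly in practice; to prove membership in $\langle T,g\rangle$ one checks the order and, if necessary, uses a composition-series or index computation. The genuinely delicate direction is the lower bound $r(g)\geq k$ for the exceptional classes: one must verify that \emph{no} $(k-1)$-tuple of $T$-conjugates of $g$ generates the whole group. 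When $k-1=2$ this means checking that every pair $\{g, g^{x}\}$, as $x$ ranges over (double coset representatives for) $T$, fails to generate; by a standard reduction this reduces to running over representatives of the orbits of $C_{T}(g)$ on the conjugacy class $g^{T}$, i.e.\ over the $C_{T}(g)$--$C_{T}(g)$ double cosets in $T$, and for each checking the order of $\langle g,g^{x}\rangle$. For $k-1\geq 3$ one iterates, which is where the combinatorial cost concentrates; here one leans on the a priori bounds of \cite{GurSax2003} to know that only a handful of classes can possibly have $r(g)\geq 4$, so the expensive exhaustive search is confined to those.

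The main obstacle I anticipate is twofold. First, for the largest groups in the list --- the Baby Monster $\Baby$ and the Monster $\Mon$ --- one has no small-degree faithful permutation representation, so the order and membership tests must be done inside large matrix representations, and exhaustive double-coset enumeration is out of the question; here one instead argues structurally, using the classification of maximal subgroups of $T$ to show that any two (or three) conjugates of $g$ that lie in no maximal subgroup must generate $T$, combined with enough explicit computation in the matrix representation to certify one generating tuple and to rule out small ones via the subgroup structure. Second, for the exceptional involution classes appearing in Table~\ref{tab:rg} with $r(g)=2$ impossible but, say, $r(g)$ unusually large, one must be careful that ``three conjugates generate'' is not automatic --- this is the assertion $g^{2}=1\Rightarrow r(g)=3$ for the \emph{non}-exceptional involutions, and its failures are exactly what Table~\ref{tab:rg} records; verifying each such failure requires the full double-coset (or triple-coset) search described above and is the part most demanding of computer time and of careful bookkeeping. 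Throughout, I would cross-check the {\sf GAP} computations against independent {\sc Magma} runs and against the published bounds of \cite{GurSax2003} as a sanity check, so that the final table can be asserted with confidence.
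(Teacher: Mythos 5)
Your reduction to a finite computation, the elementary lower bounds ($r(g)\geq 2$ always, and $r(g)\geq 3$ for involutions since two involutions generate a dihedral group), and the randomised search for upper bounds all match the paper. The genuine gap is in how you certify the lower bounds $r(g)\geq k$ for the exceptional classes in the \emph{large} groups. Your only tools are (a) exhaustive search over $(k-1)$-tuples of conjugates, reduced by nested centraliser orbits, and (b) an unspecified ``structural argument via maximal subgroups'' for $\Baby$ and $\Mon$. Tool (a) is what the paper does for small classes such as $\Fi_{22}$ 2A (class size $3510$) or $\HS$ 4A, but it is hopeless for, say, $\Baby$ 2A (class size $\approx 10^{10}$, where one must exclude \emph{all} generating triples), $\Fi_{24}'$ 3A/3B, $\HN{:}2$ 4D, or $\Ly$ 3A, and tool (b) is not an argument: ruling out generation of $\Baby$ by three $\{3,4\}$-transpositions from the maximal subgroup list is nothing one can read off. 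The paper closes exactly this gap with a simple counting lemma you are missing: no non-trivial element of $\Aut(G)$ fixes a generating tuple of distinct conjugates, so a generating $k$-tuple forces $\prod_{i=0}^{k-1}(|g^G|-i)\geq|\Aut(G)|$; when the class is small relative to $|\Aut(G)|$ this instantly yields $r(g)>k$, and it disposes of $(\Baby,\mathrm{2A})$, $(\Ly,\mathrm{3A})$, $(\Fi_{24}',\mathrm{3A/3B})$, $(\HN{:}2,\mathrm{4D})$, $(\Co_1,\mathrm{3A})$, $(\McL,\mathrm{3A})$, $(\J_2,\mathrm{3A})$, etc.\ with no search at all. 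Even with that lemma, two cases ($\Fi_{22}$ 3B and $\Fi_{23}$ 3B) still required a heavy enumeration of the regular $C_G(g)$-orbits on $g^G$ (tens of millions, resp.\ over a billion, of points, via the {\sc Orb} helper-subgroup machinery); your plan gives no indication of how such an enumeration would be organised or verified.

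A second, smaller gap concerns the Monster. Your plan to ``certify one generating tuple'' by explicit computation in a matrix representation of $\Mon$ is not practical, and a randomised search cannot even be run there. The paper instead works purely character-theoretically: class multiplication coefficients from the ordinary character table, combined with the fact that $\mathrm{L}_2(59)$ and $\mathrm{L}_2(71)$ are the only maximal subgroups of order divisible by $59$ resp.\ $71$, give $r(g)=2$ for all classes of order at least $3$ (with a pair-counting comparison against these subgroups for 3B and 5B), and triples of non-zero coefficients such as $(\mathrm{2A},\mathrm{2A},\mathrm{5A})$, $(\mathrm{2A},\mathrm{5A},\mathrm{71A})$ give $r(g)=3$ for 2A and 2B. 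You gesture at the maximal subgroup classification, which is the right ingredient, but without the structure-constant mechanism your treatment of $\Mon$ (and your generation checks for the other representation-poor groups $\J_4$, $\Ly$, $\Th$, $\HN$, $\Baby$, which the paper handles by order considerations against maximal subgroup orders) does not go through as written.
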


%%%%%%%%%%%%%%%%%%%%%%%%%%%%%%%%%%%%%%%%%%%%%%%%%%%%%%%%%%%%%%%%%%%%%%%%%%%%%
\begin{table}[!ht]
\renewcommand{\baselinestretch}{1.1}\selectfont
\centering
\begin{tabular}{ c | c c c c }
\hline
$G$ &  $r(g)=3$ & $r(g)=4$ & $r(g)=5$ & $r(g)=6$ \\
\hline
$\M_{22}({:}2)$ && 2B &&\\
$\J_2({:}2)$ & 3A & 2A &&\\
$\HS({:}2)$ & 4A & 2C &&\\
$\McL({:}2)$ & 3A &&&\\
$\Suz({:}2)$ && 3A &&\\
$\Co_2$ && 2A &&\\
$\Co_1$ & 3A &&&\\
$\Fi_{22}({:}2)$ & 3A, 3B & 2D && 2A\\
$\Fi_{23}$ & 3A, 3B &&& 2A\\
$\Fi_{24}'({:}2)$ & 3A, 3B && 2C &\\
$\HN({:}2)$ & 4D &&&\\
$\Ly$ & 3A &&&\\
$\Baby$ && 2A &&\\
\hline
\end{tabular}
\caption{Exceptional values of $r(g)$} 
\label{tab:rg}
\renewcommand{\baselinestretch}{1.3}\selectfont
\end{table}	
%%%%%%%%%%%%%%%%%%%%%%%%%%%%%%%%%%%%%%%%%%%%%%%%%%%%%%%%%%%%%%%%%%%%%%%%%%%%%

This paper is organised as follows. 
In \S\ref{s: prelim} we collect some notation, definitions and basic facts. 
In \S\ref{s: bounds} we determine bounds both for the dimensions of faithful irreducible representations admitting 
no regular orbit and for base sizes.  
In \S\ref{s:dims} we address the ``dimension gaps" that occur when $m(G',p)\leq u(G,p)$ and the $p$-modular Brauer character table of $G$ is not known.
In \S\ref{s: comp} we briefly discuss computational aspects, and 
in \S\ref{s:rg} and \S\ref{s:proofs} we prove Theorems~\ref{thm:rg} and~\ref{regular},  respectively.

%%%%%%%%%%%%%%%%%%%%%%%%%%%%%%%%%%%%%%%%%%%%%%%%%%%%%%%%%%%%%%%%%%%%%%%%%%%%%
\section{Preliminaries}
\label{s: prelim}

Let  $G$ be a finite group. We denote the derived subgroup of $G$ by $G'$, the centre of $G$ by $Z(G)$, and the conjugacy class of $g\in G$ by $g^G$.  A finite group $G$ is \textit{almost simple} if $T\unlhd G\leq \Aut(T)$ for some non-abelian simple group $T$. The subgroup of $G$ generated by the minimal normal subgroups of $G$ is the \textit{socle} of $G$, and $G$ is almost simple precisely when its socle is a non-abelian simple group. 
If $G=G'$ and $G/Z(G)$ is simple, then $G$ is \textit{quasisimple}. 
  
A finite group $L$ is a  \textit{covering group}  of  $G$ (by a group $M$) if  
$L/M\simeq G$ where  $M\leq Z(L)\cap L'$.  
Every finite group $G$ has a  \textit{universal covering group} \cite[Theorem 4.226]{Gor1982}, 
which is a covering group of $G$ by $M(G)$, the Schur multiplier of $G$ (see \cite[\S 4.15]{Gor1982} for a definition). 
If $L$ is a covering group of $G$ by a group $M$, then $L$ is a homomorphic image of 
some universal covering group of $G$ and $M$ is a homomorphic image 
of $M(G)$ (see \cite[Proposition 4.227]{Gor1982}). 
Universal covering groups are determined up to isoclinism in general, 
and up to isomorphism when $G$ is simple \cite[Chap.~4, \S 1]{Atlas}. 
A finite group $L$ is a covering group of 
an almost simple group $G$ precisely when $L/Z(L)\simeq G$ and $Z(L)\leq L'$.

Let $T$ be one of the $26$ sporadic simple groups. A wealth of information about these groups may be found in \cite{Atlas}, with which our notation is (more or less)  consistent.  It is well known that $M(T)$ is cyclic (see, for example, \cite[Theorem 5.1.4]{KleLie1990}), and also that $\Aut(T)=T$ or $T{:}2$. 
If $\Aut(T)=T{:}2$, then $M(\Aut(T))\leq M(T)$. 
(To see this, observe that the derived subgroup of a covering
group of $T{:}2$ is perfect with central quotient $T$.)
Thus by~\cite{Atlas},  $M(\Aut(T))=C_s$ where $s=(2,|M(T)|)$, and if $s=2$, then $\Aut(T)$ has  exactly two universal covering groups. The ordinary character table of one of these groups is listed in \cite{Atlas}, and  we denote this group by $2.T.2^+$. We denote the other universal covering group by $2.T.2^-$; its character table is easily derived from that of $2.T.2^+$  (see \cite[Chap.\ 6, \S 6]{Atlas}).
In Table \ref{tab:sporadic}, for the convenience of the reader, we list 
the orders of $T$, its Schur multiplier $M(T)$, 
and its outer automorphism group $\Out(T)$.

%%%%%%%%%%%%%%%%%%%%%%%%%%%%%%%%%%%%%%%%%%%%%%%%%%%%%%%%%%%%%%%%%%%%%%%%%%%%%
\begin{table}[!ht]
\renewcommand{\baselinestretch}{1.1}\selectfont
\centering
\begin{tabular}{ c c c  c  }
\hline
$T$ & \!$|M(T)|$\! & \!$|\Out(T)|$\!  & $|T|$ \\
\hline
$\M_{11}$ & $1$ & $1$   & $2^4.3^2.5.11$\\
$\M_{12}$ & $2$ & $2$   & $2^6.3^2.5.11$\\
$\M_{22}$ & $12$ & $2$ &  $2^7.3^2.5.7.11$\\
$\M_{23}$ & $1$ & $1$  & $2^7.3^2.5.7.11.23$\\
$\M_{24}$ & $1$ & $1$ &  $2^{10}.3^3.5.7.11.23$\\ 
$\J_1$ & $1$ & $1$ &  $2^3.3.5.7.11.19$\\
$\J_2$ & $2$ & $2$  & $2^7.3^3.5^2.7$\\
$\J_3$ & $3$ & $2$ &  $2^7.3^5.5.17.19$\\
$\J_4$ & $1$ & $1$ &  $2^{21}.3^3.5.7.11^3.23.29.31.37.43$\\
$\HS$ & $2$ & $2$ &  $2^9.3^2.5^3.7.11$\\
$\McL$ & $3$ & $2$ & $2^7.3^6.5^3.7.11$\\
$\He$ & $1$ & $2$ &  $2^{10}.3^3.5^2.7^3.17$\\
$\Ru$ & $2$ & $1$ &  $2^{14}.3^3.5^3.7.13.29$\\
$\Suz$ & $6$ & $2$ &  $2^{13}.3^7.5^2.7.11.13$\\
$\ON$ & $3$ & $2$ &  $2^9.3^4.5.7^3.11.19.31$\\
$\Co_3$ & $1$ & $1$ & $2^{10}.3^7.5^3.7.11.23$\\
$\Co_2$ & $1$ & $1$ &  $2^{18}.3^6.5^3.7.11.23$\\
$\Co_1$ & $2$ & $1$ &  $2^{21}.3^9.5^4.7^2.11.13.23$\\
$\Fi_{22}$ & $6$ & $2$ &  $2^{17}.3^9.5^2.7.11.13$\\
$\Fi_{23}$ & $1$ & $1$ &  $2^{18}.3^{13}.5^2.7.11.13.17.23$\\
$\Fi_{24}'$ & $3$ & $2$ &  $2^{21}.3^{16}.5^2.7^3.11.13.17.23.29$\\
$\HN$ & $1$ & $2$ &  $2^{14}.3^6.5^6.7.11.19$\\
$\Ly$ & $1$ & $1$ & $2^8.3^7.5^6.7.11.31.37.67$\\
$\Th$ & $1$ & $1$ &  $2^{15}.3^{10}.5^3.7^2.13.19.31$\\
$\Baby$ & $2$ & $1$ &  $2^{41}.3^{13}.5^6.7^2.11.13.17.19.23.31.47$\\
$\Mon$ & $1$ & $1$ & 
 $2^{46}.3^{20}.5^9.7^6.11^2.13^3.17.19.23.29.31.41.47.59.71$\\
\hline
\end{tabular}
\caption{The sporadic simple groups $T$} 
\label{tab:sporadic}
\renewcommand{\baselinestretch}{1.3}\selectfont
\end{table}
%%%%%%%%%%%%%%%%%%%%%%%%%%%%%%%%%%%%%%%%%%%%%%%%%%%%%%%%%%%%%%%%%%%%%%%%%%%%%

For a prime $p$, the ($p$-modular) Brauer character table of $G$ encodes information about the absolutely irreducible  representations of $G$ in characteristic $p$ by lifting the eigenvalues of the matrices representing $G$ to a field of characteristic $0$ (see \cite[\S 4]{BAtlas} for a definition).
  We often use the known Brauer character tables of the sporadic simple groups.
 For those sporadic simple groups $T$ whose order is at most $|\McL|$, the 
Brauer Atlas \cite{BAtlas} contains the Brauer character tables of all 
bicyclic extensions of $T$ for primes $p$ dividing $|T|$; these tables are 
known for some larger groups, see \cite{mod-atlas} for the available data. 
 
   Let $F$ be a field. We denote the  group algebra of  $G$ over $F$ by $FG$. All $FG$-modules in this paper are finite-dimensional, and we denote the dimension or degree of an $FG$-module $V$ by $\dim_F(V)$. An  irreducible $FG$-module $V$ is \textit{absolutely irreducible} if the extension of scalars $V\otimes_F E$ is irreducible for every field extension $E$ of $F$, and this occurs precisely when $\End_{FG}(V)=F$ 
(see \cite[Lemma VII.2.2]{BlaHup1981}), 
where $\End_{FG}(V)$ denotes the set of $FG$-endomorphisms of $V$. We denote the finite field of order $q$ by $\mathbb{F}_q$.
 
 Let $V$ be an irreducible $\mathbb{F}_pG$-module where $p$ is prime, and let $k:=\End_{\mathbb{F}_pG}(V)$. Now $k$ is a finite division ring and therefore a field, so $V$ is an absolutely irreducible $kG$-module where scalar multiplication is evaluation. 
Let $\chi$ be the 
Frobenius
character of $V$ as a $kG$-module, and let $H$ be the Galois group of the field extension $k/\mathbb{F}_p$. By \cite[Theorem VII.1.16]{BlaHup1981}, $k=\mathbb{F}_p(\{\chi(g):g\in G\})$ and $V\otimes_{\mathbb{F}_p}k=\bigoplus_{\gamma\in H} V_\gamma$, where 
the $V_\gamma$ are pairwise non-isomorphic
absolutely irreducible $kG$-modules with character 
${}^\gamma\chi: G\rightarrow k:g\mapsto\gamma(\chi(g))$ 
which cannot be realised over any proper subfield of $k$. Now $\dim_k(V)$ is given by the $p$-modular Brauer character table of $G$, and we can use this table to determine $\{\chi(g):g\in G\}$ (see \cite[\S\S 2-5]{BAtlas}) and therefore the $\mathbb{F}_pG$-module $V$.

Let $N$ be a subgroup of $G$ with index $2$. Let $V$ be an irreducible $FG$-module, 
and let $W$ be  an irreducible $FN$-submodule of $V|_N$, the restriction 
of $V$ to $N$. It is well known from Clifford theory that  either 
$V|_N=W$, or $V|_N=W\oplus Wg$ for all $g\in G\setminus N$. 
We frequently use the following observations without reference. 
If $V|_N=W\oplus Wg$ and $W$ is a faithful $FN$-module, then the base sizes of $N$ on $W$ and $Wg$ are equal,  and this base size is at least $b(G)$
(since $N\neq 1$). If instead $V|_N=W$ and $V$ is a faithful $FG$-module, then $b(N)\leq b(G)$.

%%%%%%%%%%%%%%%%%%%%%%%%%%%%%%%%%%%%%%%%%%%%%%%%%%%%%%%%%%%%%%%%%%%%%%%%%%%%%
\section{Some useful bounds}
\label{s: bounds}

If $G$ acts faithfully on a finite set $\Omega$, 
then $|G|\leq |\Omega|^{b(G)}$ since, for every base  $B$ of $G$, 
each $g\in G$ is uniquely determined by $\{\alpha^g : \alpha\in B\}$.  
Thus we have the following elementary but useful result.

\begin{lemma}
\label{lowerbound}
Let $G$ be a finite group and $F$ a finite field.
If $V$ is a faithful $FG$-module, 
then $|G|\leq (|V|-1)^{b(G)}$.
\end{lemma}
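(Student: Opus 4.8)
The statement to prove is Lemma~\ref{lowerbound}, which is the elementary counting bound $|G| \le (|V|-1)^{b(G)}$ for a faithful $FG$-module $V$ over a finite field $F$. Here is how I would prove it.

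\medskip

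The plan is to exploit the observation already made in the paragraph preceding the lemma: when $G$ acts faithfully on a finite set $\Omega$, every element of $G$ is determined by its effect on a fixed base $B$, so $|G| \le |\Omega|^{|B|}$. The only extra input needed is to choose the right set $\Omega$ for the action on $V$. First I would fix a base $B$ for the action of $G$ on $V$ with $|B| = b(G)$; such a base exists by the definition of base size. Since $V$ is a vector space, $0 \in V$ is fixed by every element of $G$, so $0 \notin B$ (otherwise the identity alone could not be the only element fixing every point of $B$, unless $|G|=1$, and even then we may take $B \subseteq V \setminus \{0\}$; it is harmless to assume $B$ avoids $0$ since removing $0$ from any base leaves a base). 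Thus $B \subseteq \Omega := V \setminus \{0\}$, a set of size $|V| - 1$.

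\medskip

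Next I would spell out the injectivity argument. Define the map $G \to \Omega^{B}$ (equivalently, into the set of functions $B \to \Omega$) sending $g \mapsto (b \mapsto b^g)$. This is well-defined because $G$ permutes $\Omega$: indeed $G$ acts linearly, so it fixes $0$ and hence maps $V \setminus \{0\}$ to itself. It is injective: if $g, h \in G$ have $b^g = b^h$ for all $b \in B$, then $b^{gh^{-1}} = b$ for all $b \in B$, so $gh^{-1}$ fixes every element of $B$; since $B$ is a base, $gh^{-1} = 1$, i.e.\ $g = h$. Therefore $|G| \le |\Omega^{B}| = |\Omega|^{|B|} = (|V|-1)^{b(G)}$, which is exactly the claimed inequality.

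\medskip

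There is no real obstacle here; the lemma is a one-line consequence of the pigeonhole/counting principle once one records that $0$ is a global fixed point and so may be excluded from any base. The only points requiring a word of care are: (i) a base can always be taken inside $V \setminus \{0\}$, and (ii) $G$ genuinely acts on the set $V \setminus \{0\}$ (linearity, faithfulness is given). Both are immediate. I would present the proof in two or three sentences, essentially as above, possibly just citing the displayed inequality $|G| \le |\Omega|^{b(G)}$ from the preceding paragraph applied to $\Omega = V \setminus \{0\}$.
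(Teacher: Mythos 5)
Your proof is correct and is essentially the paper's own argument: the lemma is deduced from the preceding observation that $|G|\leq|\Omega|^{b(G)}$ for a faithful action, applied with $\Omega=V\setminus\{0\}$ after noting that $0$ is fixed by all of $G$ and so a minimal base avoids it. The small parenthetical about the case $|G|=1$ is slightly muddled but harmless, since removing $0$ from any base always leaves a base.
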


For a group $G$, field $F$ and $FG$-module $V$, define   
$C_V(g):=\{v\in V:vg=v\}$ for all $g\in G$.  
The following generalises \cite[Lemma 3.3]{FawObrSax2016}.
 
\begin{lemma}
\label{upperbound}
\label{strong bound}
Let $G$ be a finite group and $F$ a finite field. 
Let $V$ be a faithful $FG$-module. 
Let $X$ be a set of representatives for the conjugacy classes of
elements of prime order in~$G$. If $n$ is a positive integer for which
 \begin{equation}
 \label{eqn:strong}
 |V|^n>\sum_{g\in X}|g^{G}||C_{V}(g)|^n,
\end{equation}
then $b(G)\leq n$.   
\end{lemma}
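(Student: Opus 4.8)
The plan is to bound the number of elements of $G$ that fix at least one nonzero vector of an $n$-tuple, and then apply the pigeonhole-style counting argument implicit in Lemma~\ref{lowerbound}. Recall that a tuple $(v_1,\dots,v_n)\in V^n$ is a base for $G$ precisely when no non-trivial $g\in G$ lies in $\bigcap_{i=1}^n C_G(v_i)$, equivalently when $(v_1,\dots,v_n)$ is not fixed by any non-trivial $g$. So it suffices to show that the set of \emph{bad} tuples---those fixed by some non-trivial element---has size strictly less than $|V|^n=|V^n|$; then a base of size $n$ exists and $b(G)\le n$.

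First I would observe that any non-trivial $g\in G$ has a power of prime order, and $C_{V^n}(g)\subseteq C_{V^n}(g^k)$ for any $k$; hence the set of bad tuples is contained in $\bigcup_{g} C_{V^n}(g)$ where the union runs over all elements $g\in G$ of prime order. Next, for a fixed $g$ of prime order, the diagonal action on $V^n$ gives $C_{V^n}(g)=C_V(g)^n$, so $|C_{V^n}(g)|=|C_V(g)|^n$. Now I would group the prime-order elements into conjugacy classes: since $C_V(g^x)=C_V(g)\cdot x$ for $x\in G$ (more precisely $C_V(x^{-1}gx)=C_V(g)x$ as $FG$-modules are right modules here), all elements in a single class $g^G$ have centralisers in $V$ of the same cardinality. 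Therefore, letting $X$ be a set of representatives for the conjugacy classes of prime-order elements,
\begin{equation*}
|\{\text{bad tuples}\}|\;\le\;\Bigl|\bigcup_{g\in G,\ |g|\text{ prime}} C_V(g)^n\Bigr|\;\le\;\sum_{g\in X}|g^G|\,|C_V(g)|^n.
\end{equation*}
If \eqref{eqn:strong} holds, this sum is strictly less than $|V|^n=|V^n|$, so there is a tuple in $V^n$ fixed by no non-trivial element, i.e.\ a base of size $n$, whence $b(G)\le n$.

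The argument is essentially a union bound, so there is no serious obstacle; the only points requiring a little care are the reduction from arbitrary non-trivial elements to prime-order elements (using the containment of fixed spaces under taking powers, which is where the hypothesis that $X$ ranges only over \emph{prime-order} classes is exploited) and the bookkeeping that conjugation preserves $|C_V(g)|$ so that summing over class representatives weighted by class sizes correctly over-counts the union. One should also note the statement is vacuous unless the right-hand side of \eqref{eqn:strong} can be made smaller than $|V|^n$ for some $n$, but the lemma only asserts the implication. This is precisely the generalisation of \cite[Lemma 3.3]{FawObrSax2016} alluded to above, the generalisation being that $n$ is now an arbitrary positive integer rather than being fixed at a particular small value.
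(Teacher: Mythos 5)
Your proof is correct and follows essentially the same route as the paper: reduce to elements of prime order, apply the union bound over conjugacy classes using $|C_V(g^x)|=|C_V(g)|$, and use $C_{V^n}(g)=C_V(g)^n$ to conclude that a tuple in $V^n$ avoided by all non-trivial elements (i.e.\ a base of size $n$) exists when \eqref{eqn:strong} holds. The paper merely phrases the same argument via "$b(G)\le n$ iff $G$ has a regular orbit on $V^n$" applied to the covering $W=\bigcup C_W(g)$ for a general faithful module $W$.
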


\begin{proof}
 Let $Y$ be the set of elements of prime order in $G$, and let $W$ be a faithful $FG$-module.  If $G$ has no regular orbit on $W$, then $W=\bigcup_{g\in Y}C_W(g)$, so $|W|\leq \sum_{g\in X}|g^{G}||C_{W}(g)|$. The base size of $G$ on $V$ is at most $n$ if and only if $G$ has a regular orbit on the faithful $FG$-module $V^n$. Since $C_{V^n}(g)=C_V(g)^n$, the result follows.
\end{proof}

Let $G$ be a finite group such that $G/Z(G)$ is almost simple  
with socle $T$, and let $g\in G\setminus Z(G)$. Now $\langle T,Z(G)g\rangle$ is generated by  the $T$-conjugates of $Z(G)g$, so we may define $r(g)$ to be  the minimal number of $T$-conjugates of $Z(G)g$   generating  $\langle T,Z(G)g\rangle$. This extends the definition given in \S\ref{s:intro}.  The following generalises  \cite[Lemma 3.5]{FawObrSax2016}.

\begin{lemma}
\label{general bound}
Let $G$ be a finite  
group with $G/Z(G)$ almost simple and $V$ a faithful irreducible 
$\mathbb{F}_qG$-module where $q$ is a prime  power. Let $X$ be a set of representatives for the conjugacy classes of  non-central elements of prime order in~$G$, and let $u(G,q)$ be the largest integer such that
$$ 1\leq \sum_{g\in X}|g^G|(1/q)^{u(G,q)/r(g)}.  $$
If $G$ has no regular orbit on $V$, then  
$ \dim_{\mathbb{F}_q}(V)\leq u(G,q). $
\end{lemma}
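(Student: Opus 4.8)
The plan is to combine the counting bound of Lemma~\ref{strong bound} with the key observation that for a non-central element $g$ of prime order, the fixed space $C_V(g)$ has dimension at most $(1-1/r(g))\dim_{\mathbb{F}_q}(V)$. Indeed, write $d := \dim_{\mathbb{F}_q}(V)$ and $r := r(g)$. By definition of $r(g)$, there exist $T$-conjugates $g = g_1, g_2, \dots, g_r$ of $Z(G)g$ (lifted back to $G$, possibly after multiplying by central elements which act as scalars and so do not change fixed spaces up to the relevant count) such that $\langle g_1,\dots,g_r\rangle$ together with $Z(G)$ maps onto $\langle T, Z(G)g\rangle$; in particular $\langle T, g_1, \dots, g_r\rangle$ acts irreducibly on $V$, so $\bigcap_{i=1}^r C_V(g_i) = C_V(\langle g_1,\dots,g_r\rangle)$ is a proper $\langle T,g\rangle$-submodule intersected appropriately — more precisely, since $V$ is irreducible for $G$ and the $g_i$ generate a subgroup whose normal closure (together with $T$) is all of $\langle T, Z(G)g\rangle$, the common fixed space $\bigcap_i C_V(g_i)$ is a subspace fixed by a group acting irreducibly, hence is $0$ (it cannot be all of $V$ as $g\notin Z(G)$ acts non-trivially, and $V$ is irreducible). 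Since each $C_V(g_i)$ is a conjugate of $C_V(g)$ and so has the same dimension, and $r$ subspaces of dimension $c$ in a $d$-dimensional space have intersection of dimension at least $rc - (r-1)d$, the vanishing of the intersection forces $rc - (r-1)d \le 0$, i.e.\ $\dim_{\mathbb{F}_q} C_V(g) = c \le (1 - 1/r)d$. Hence $|C_V(g)| = q^{c} \le q^{(1-1/r(g))d}$.

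Next I would feed this into Lemma~\ref{strong bound} with $n = 1$: if $G$ has a regular orbit on $V$ then in particular we do \emph{not} need the failure condition, but the contrapositive direction is what we want. If $G$ has \emph{no} regular orbit on $V$, then as in the proof of Lemma~\ref{strong bound} (with $n=1$), $V = \bigcup_{g \in Y} C_V(g)$ where $Y$ is the set of elements of prime order in $G$; but central elements of prime order act as scalars and fix only $0$ (since $V$ is faithful and irreducible, a central element acting non-trivially has no non-zero fixed vector), so in fact $V \setminus \{0\}$ is covered by the $C_V(g)$ for $g$ ranging over \emph{non-central} elements of prime order. Therefore $|V| \le 1 + \sum_{g\in X} |g^G| (|C_V(g)| - 1) < \sum_{g\in X}|g^G| |C_V(g)|$, and applying the bound $|C_V(g)| \le q^{(1 - 1/r(g))d}$ together with $|V| = q^d$ gives
\begin{equation*}
q^d \;\le\; \sum_{g\in X} |g^G|\, q^{(1 - 1/r(g))d} \;=\; q^d \sum_{g\in X} |g^G|\, q^{-d/r(g)},
\end{equation*}
so that $1 \le \sum_{g\in X} |g^G| (1/q)^{d/r(g)}$. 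By the definition of $u(G,q)$ as the largest integer for which this inequality holds — and noting that the right-hand side is a strictly decreasing function of $d$, so the set of valid $d$ is a downward-closed set of positive integers — we conclude $d \le u(G,q)$, as claimed.

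The main obstacle, and the point that needs to be argued with a little care rather than waved through, is the first step: verifying that $\bigcap_{i=1}^{r(g)} C_V(g_i) = 0$. The subtlety is the passage between $G$ and $G/Z(G)$: the definition of $r(g)$ concerns $T$-conjugates of $Z(G)g$ generating $\langle T, Z(G)g\rangle$ inside $G/Z(G)$, whereas the fixed-space argument lives in $G$. One resolves this by choosing preimages in $G$ of the relevant $T$-conjugates of $Z(G)g$; the subgroup $H$ of $G$ they generate satisfies $HZ(G)/Z(G) = \langle T, Z(G)g\rangle$, and since $Z(G)$ acts by scalars on the irreducible module $V$, the subspace $\bigcap_i C_V(g_i)$ is invariant under $H$ and under $Z(G)$, hence under $HZ(G)$, whose image $\langle T, Z(G)g\rangle$ acts irreducibly (as $T$ already does, $V$ being a faithful irreducible $\mathbb{F}_qG$-module with $G/Z(G)$ almost simple — strictly, one should invoke that $T$ acts non-trivially and the $G$-irreducibility together with Clifford theory to see the relevant submodule is $0$ or $V$). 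Since $g \notin Z(G)$, the element $g$ does not act as a scalar, so $C_V(g) \neq V$ and hence the common fixed space is the $0$ space. The remaining arithmetic — that the feasible set of $d$ is downward closed, so that $d \le u(G,q)$ follows — is routine, using monotonicity of $t \mapsto q^{-t/r}$.
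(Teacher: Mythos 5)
Your overall skeleton is the paper's: the covering bound of Lemma~\ref{strong bound} with $n=1$, the treatment of central elements of prime order, the fixed-space estimate $|C_V(g)|\leq q^{d-d/r(g)}$, and the monotonicity argument giving $d\leq u(G,q)$ are exactly how the paper proceeds (it simply quotes \cite[Lemma 3.4]{FawObrSax2016} for the fixed-space estimate, which you instead try to prove from scratch). Your arithmetic and the intersection count $\dim\bigcap_{i=1}^r C_V(g_i)\geq rc-(r-1)d$ are fine. The genuine gap is in your justification of the crucial claim $\bigcap_{i=1}^{r}C_V(g_i)=0$: you deduce it from the assertion that the preimage of $\langle T,Z(G)g\rangle$ acts irreducibly on $V$ ("as $T$ already does"), so that the $HZ(G)$-invariant subspace must be $0$ or $V$. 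That assertion is false in general. $V$ is only assumed irreducible as an $\mathbb{F}_qG$-module, and by Clifford theory its restriction to the preimage $\hat T$ of $T$ may split as $W\oplus Wg$ when $G/Z(G)=T{:}2$; such cases occur in this very paper (for instance the $20$-dimensional $\mathbb{F}_3(\M_{12}{:}2)$-module restricts to $\M_{12}$ as the sum of the two $10$-dimensional modules). Your parenthetical fallback does not repair this: Clifford theory gives semisimplicity of $V|_{\hat T}$ with $G$-conjugate constituents, not that every $\hat T$-submodule is $0$ or $V$, and $\bigcap_i C_V(g_i)$ is not a $G$-submodule, so $G$-irreducibility cannot be applied to it directly.

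The conclusion is nevertheless true, but it needs a different idea (essentially what the cited lemma supplies). If $0\neq v\in\bigcap_i C_V(g_i)$, then $v$ is fixed pointwise by $H=\langle g_1,\dots,g_r\rangle$; since every element of $\hat T\leq HZ(G)$ has the form $hz$ with $h\in H$ and $z\in Z(G)$ acting as a scalar of $\End_{\mathbb{F}_pG}(V)$, the vector $v$ is a common eigenvector for $\hat T$. Hence the derived subgroup $[\hat T,\hat T]$, a non-trivial normal subgroup of $G$ (its image in $G/Z(G)$ is $T$), fixes $v$; but $C_V([\hat T,\hat T])$ is a $G$-submodule, so by irreducibility it equals $V$, contradicting faithfulness. (Equivalently: the span of the $G$-orbit of $v$ is $V$ and consists of common eigenvectors of the normal subgroup $\hat T$, forcing the image of $\hat T$ to be abelian.) One further small inaccuracy: multiplying a lift by a non-trivial central element does change its fixed space (it becomes an eigenspace for a different eigenvalue), so you should take the $g_i$ to be genuine $G$-conjugates $g^{t_i}$ with $t_i$ a preimage of the conjugating element of $T$; then their images are the required $T$-conjugates of $Z(G)g$ and $\dim C_V(g_i)=\dim C_V(g)$. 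With these repairs your argument goes through and coincides in substance with the paper's.
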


\begin{proof}
Let $d:=\dim_{\mathbb{F}_q}(V)$. 
Note that  $C_V(g)=\{0\}$ for $g\in Z(G)\setminus \{1\}$.  
If $G$ has no regular orbit on $V$, then by 
Lemma \ref{strong bound} and \cite[Lemma 3.4]{FawObrSax2016},
$$q^d\leq \sum_{g\in X}|g^G||C_V(g)|\leq \sum_{g\in X}|g^G|q^{d-d/r(g)},$$ 
so $d\leq u(G,q)$. 
\end{proof}

%%%%%%%%%%%%%%%%%%%%%%%%%%%%%%%%%%%%%%%%%%%%%%%%%%%%%%%%%%%%%%%%%%%%%%%%%%%%%
\section{Dimension gaps} \label{s:dims}

In this section we consider those cases where the upper bound
$u(G,p)$ for the dimension of a faithful irreducible $\mathbb{F}_pG$-module
on which $G$ has no regular orbit (as given by Lemma~\ref{general bound})
is at least  the minimal dimension of a  
faithful irreducible representation of $G'$ in characteristic $p$
(as given by \cite{Jan2005}), but the $p$-modular
Brauer character table of $G$ is not yet known. 
These ``dimension gaps" occur when 
$(G,p)$ is one of $(\J_4,2)$,  $(\Co_1,2)$, $(2.\Co_1,3)$ or $(2.\Co_1,5)$. 
For each, we first compute $u(G,p)$ using the ordinary 
character table of $G$ and Theorem~\ref{thm:rg},
and then determine the representations 
whose dimensions are at most $u(G,p)$.
Note that these results (for the dimension bound of $250$) 
are stated in \cite{HissMalle2001}, but explicit proofs, 
 which often depend on computations with the {\sc MOC} 
 system \cite{HissJanLuxPar}, are omitted.

\begin{lemma}
\label{lemma:Co1}
There is a unique faithful irreducible $2$-modular representation of $\Co_1$ of 
degree at most $u(\Co_1,2)=117$. This representation has degree $24$. 
\end{lemma}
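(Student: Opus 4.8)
The plan is to split the statement into two independent parts: first, the purely arithmetic bound $u(\Co_1,2)=117$; second, the representation-theoretic claim that the only faithful irreducible $2$-modular module of $\Co_1$ of degree $\le 117$ is the $24$-dimensional one.

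For the first part, recall that $\Co_1$ is simple, so $G=G'=T$ and $Z(G)=1$; hence $X$ in Lemma~\ref{general bound} is simply a set of representatives for the classes of elements of prime order in $\Co_1$. These classes, their sizes $|g^{\Co_1}|$, and the associated values $r(g)$ are all available: the class data comes from the ordinary character table in \cite{Atlas}, and the values of $r(g)$ come from Theorem~\ref{thm:rg}, which tells us that $r(g)=3$ for $g$ in class $2$A, $r(g)=3$ for involutions in the remaining involution classes, $r(g)=3$ for class $3$A, and $r(g)=2$ for every other non-trivial element (noting that $2.\Co_1$ appears in Table~\ref{tab:rg} only in characteristic $3,5,7$, and for $\Co_1$ itself only class $3$A is exceptional, with $r=3$, besides the generic involution value $r=3$). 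One then simply forms the function $u\mapsto\sum_{g\in X}|g^{\Co_1}|(1/2)^{u/r(g)}$, which is strictly decreasing in $u$, and checks by direct computation that it is $\ge 1$ at $u=117$ and $<1$ at $u=118$; this is a finite machine calculation with the {\sf GAP} character-table library and is the content of the claim $u(\Co_1,2)=117$.

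For the second part, the obstacle is that the full $2$-modular Brauer character table of $\Co_1$ is not known, so we cannot simply read off the irreducible degrees. Instead I would argue as follows. By \cite{Jan2005}, the minimal degree of a faithful irreducible $2$-modular representation of $\Co_1$ is $24$, realised by the reduction mod $2$ of the (faithful, $\mathbb{F}_2$-rational) $24$-dimensional Leech-lattice module, which is absolutely irreducible. Now suppose $V$ is any faithful irreducible $\mathbb{F}_2\Co_1$-module with $\dim V\le 117$; passing to $k:=\End_{\mathbb{F}_2\Co_1}(V)$ and an absolutely irreducible constituent over $k$, we get an absolutely irreducible $\overline{\mathbb{F}_2}\Co_1$-module of degree $d$ with $24\le d\le 117$ (since $V$ is a direct sum of Galois-conjugates of this constituent). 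One then rules out every $d$ in the range $25,\dots,117$ by the methods of \cite{HissMalle2001}: restrict a putative module to a large maximal subgroup (such as $\Co_2$, $3{\cdot}\Suz{:}2$, or $2^{11}{:}\mathrm{M}_{24}$), whose $2$-modular character tables and decomposition matrices are known, and use the fact that the Brauer character of any $\overline{\mathbb{F}_2}\Co_1$-module must be a non-negative integer combination of the irreducible $2$-modular Brauer characters of the subgroup; combined with the ordinary character table of $\Co_1$ (which bounds which ordinary characters can lie above a given Brauer character) and condensation/{\sc MOC}-style computations, this forces the degree into $\{24\}$. The $24$-dimensional module is $\mathbb{F}_2$-rational and absolutely irreducible, so it is unique up to isomorphism as an $\mathbb{F}_2\Co_1$-module.

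The main obstacle is genuinely the second part: without the Brauer table one must do real modular-representation-theoretic work — restriction to maximal subgroups, analysis of the known decomposition matrices, and {\sc MOC} computations — to exclude all intermediate degrees. I would organise this as a table-by-table elimination over the degrees $25\le d\le 117$, leaning on the existing analysis of \cite{HissMalle2001} for the bound $250$ (which already covers everything we need) and supplying the explicit computational justification that that reference omits. Once that elimination is complete, the lemma follows immediately by combining it with the value $u(\Co_1,2)=117$ from the first part.
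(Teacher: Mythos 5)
Your first step (the computation $u(\Co_1,2)=117$ via Lemma~\ref{general bound}, the class data of $\Co_1$ and the values $r(g)=3$ for involutions and class 3A, $r(g)=2$ otherwise) is fine and is exactly what the paper does. The gap is in the second step. There you say that every degree $25\leq d\leq 117$ is ruled out ``by the methods of \cite{HissMalle2001}'', i.e.\ by restriction to a large maximal subgroup plus decomposition-matrix constraints and {\sc MOC}/condensation computations, ``supplying the explicit computational justification that that reference omits''. But those omitted justifications are precisely what this lemma is supposed to provide, and your plan never identifies a mechanism that actually eliminates the intermediate degrees. Restriction to a single subgroup with known $2$-modular table only constrains the composition factors of the restriction: restricting to $3.\Suz{:}2$ forces constituents of degree $1$ and $24$, which is perfectly consistent with a putative faithful irreducible of degree $25$, $48$, $49$, $72$, \dots; restricting to $2^{11}{:}\M_{24}$ is no better, since the normal $2$-group acts trivially in characteristic $2$ and the constituents are $\M_{24}$-modules of degrees $1,11,11,44,44$. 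The paper closes exactly this hole by playing off \emph{two} subgroups that generate $\Co_1$, namely $3.\Suz{:}2$ and $3^6{:}2.\M_{12}$ meeting in $3^5{:}(2\times\M_{11})$: the orbits of $2.\M_{12}$ on the linear characters of $3^6$ force the restriction to $3^6{:}2.\M_{12}$ to be $24^k+1^j$; block separation (the $24$ lies outside the principal block of both subgroups) together with perfectness of $3^6{:}2.\M_{12}$ gives $j=0$; and the vanishing of self-extensions of the $24$ for $3.\Suz{:}2$ gives complete reducibility and hence $k=1$. Your proposal has no counterpart to any of these steps, so as written the elimination of $25\leq d\leq 117$ is an appeal to authority rather than a proof.

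There is a second, smaller gap in your uniqueness claim. ``The $24$-dimensional module is $\mathbb{F}_2$-rational and absolutely irreducible, so it is unique up to isomorphism'' only shows that a given irreducible Brauer character of degree $24$ has a unique $\mathbb{F}_2$-form; it does not show that $\Co_1$ has only one irreducible $2$-modular Brauer character of degree $24$. The paper obtains this from the same amalgam: any degree-$24$ irreducible restricts as the (unique) $24$ to each of the two generating subgroups, and there is a unique amalgamation of these subgroups inside $\GL_{24}(2)$, whence a unique representation of $\Co_1$. You would need either this argument or an explicit computation of all degree-$24$ irreducibles to finish the uniqueness statement.
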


\begin{proof}
Recall from the list of maximal subgroups in \cite{Atlas} (as corrected in \cite{BAtlas}) that  $\Co_1$ can be generated by subgroups $3^6{:}2.\M_{12}$
and $3.\Suz{:}2$, intersecting in $3^5{:}(2\times \M_{11})$.
The $2$-modular irreducibles of $3.\Suz{:}2$ of degree at most $117$ 
have degrees $1$ and $24$. 
Hence every irreducible character of $\Co_1$ of degree at most $117$
has character restriction composed of $1$ and $24$. 
Restricting to $3^5{:}(2\times \M_{11})$, the $24$ remains irreducible.
Since the orbits of $2.\M_{12}$ on the linear characters of $3^6$ are
$1+24+264+440$, the only possible character restriction to 
$3^6{:}2.\M_{12}$ is $24^k+1^j$. 

The $24$ is not in the principal block of either $3.\Suz{:}2$, 
or of $3^6{:}2.\M_{12}$. 
Hence the principal and non-principal block
components of the restrictions to these subgroups coincide. 
If $j>0$, then the perfect group $3^6{:}2.\M_{12}$ acts trivially
on the non-zero principal block component, a contradiction. 
Hence $j=0$. Since the irreducible $24$ for $3.\Suz{:}2$ 
has no non-splitting self-extensions, $3.\Suz{:}2$ acts completely 
reducibly, so there is a $24$-dimensional subspace 
invariant under $\Co_1$, implying $k=1$.
Finally, for every irreducible  representation of degree $24$,
there is a unique amalgamation inside $\GL_{24}(2)$, so there
is a unique such representation of $\Co_1$. 
\end{proof}

\begin{lemma}
\mbox{}
 \label{lemma:2Co1}
\begin{enumerate}
\item There is a unique faithful irreducible $3$-modular representation
of $2.\Co_1$ of degree at most $u(2.\Co_1,3)=74$.  
This representation has degree $24$.
\item There is a unique faithful irreducible $5$-modular representation
of $2.\Co_1$ of degree at most $u(2.\Co_1,5)=50$. 
This representation has degree $24$.
\end{enumerate}
\end{lemma}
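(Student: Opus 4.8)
The plan is to mimic the structure of the proof of Lemma~\ref{lemma:Co1}, exploiting a suitable pair of maximal subgroups of $2.\Co_1$ whose intersection is large, together with the already-established dimension bound $24$ for the relevant proper subgroup representations. For part (1), I would first record $u(2.\Co_1,3)=74$ by feeding the ordinary character table of $2.\Co_1$ from \cite{Atlas} and the values $r(g)$ from Theorem~\ref{thm:rg} into the defining inequality of Lemma~\ref{general bound}; this is a finite arithmetic check. The faithful irreducibles of $2.\Co_1$ are those on which the central involution acts as $-1$; since a faithful irreducible representation of $2.\Co_1$ of degree at most $74$ must restrict to a faithful representation of the subgroup $6.\Suz$ (the preimage of $3.\Suz \le \Co_1$ under $2.\Co_1 \to \Co_1$) in characteristic $3$, and the faithful $3$-modular irreducibles of $6.\Suz$ of degree at most $74$ have degree $12$ or $24$ (from the Brauer Atlas for $6.\Suz$, whose $3$-modular table is available), the composition factors of the restriction are controlled. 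One then pushes this constraint around the amalgam $2 \times 3^6{:}2.\M_{12}$ and $6.\Suz{:}2$ inside $2.\Co_1$, intersecting in the appropriate $2$-fold cover of $3^5{:}(2\times\M_{11})$, exactly as in Lemma~\ref{lemma:Co1}: the orbit sizes $1+24+264+440$ of $2.\M_{12}$ on the linear characters of $3^6$ force the restriction to $2\times 3^6{:}2.\M_{12}$ to be of the shape $24^k+1^j$ (on the faithful part), block-theoretic considerations (the $24$ lying outside the principal blocks, and perfectness of the relevant subgroups) kill the trivial constituents so $j=0$, and the absence of non-split self-extensions of the $12$ or $24$ for $6.\Suz$ forces $k=1$; finally uniqueness of the amalgamation in $\GL_{24}(3)$ gives uniqueness of the representation, which is the known $24$-dimensional one.

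For part (2), the same scheme applies verbatim with $p=3$ replaced by $p=5$ and the bound $u(2.\Co_1,5)=50$ (again computed from the ordinary character table and Theorem~\ref{thm:rg}). Here one restricts to $6.\Suz$ and uses that its faithful $5$-modular irreducibles of degree at most $50$ have degree $12$ or $24$; the rest of the argument --- the $24^k+1^j$ shape from the $2.\M_{12}$-orbits on $\mathrm{Irr}(3^6)$, the block argument forcing $j=0$, the self-extension argument forcing $k=1$, and uniqueness of the amalgamation in $\GL_{24}(5)$ --- is identical. One must take care that the $24$ remains irreducible on restriction to the intersection subgroup, which holds for both primes since $24$ is the minimal faithful degree and the relevant smaller overgroup acts irreducibly; this transitivity of irreducibility down the amalgam is what makes the constituent bookkeeping work.

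I expect the main obstacle to be verifying that the relevant small $3$-modular and $5$-modular data for the intermediate subgroups --- the irreducible degrees up to the bounds $74$ and $50$, the block distribution of the degree-$24$ module relative to the principal block, and the vanishing of self-extensions $\mathrm{Ext}^1$ for that module --- are actually available or can be established; in the cases where the ordinary Brauer character tables of $6.\Suz$ (and $2.\M_{12}$, $2\times\M_{11}$) are in \cite{BAtlas} or \cite{mod-atlas} this is routine, but some of it genuinely rests on the kind of {\sc MOC}-style computations alluded to in the remark preceding Lemma~\ref{lemma:Co1}, so the honest statement is that this step is computational. The arithmetic establishing $u(2.\Co_1,3)=74$ and $u(2.\Co_1,5)=50$ is straightforward given Theorem~\ref{thm:rg}, and the amalgam argument itself is a direct transcription of the proof just given for $\Co_1$.
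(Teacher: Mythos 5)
Your plan transcribes the amalgam argument of Lemma~\ref{lemma:Co1} using (the preimages of) $3^6{:}2.\M_{12}$ and $3.\Suz{:}2$, but that choice of subgroups is exactly what cannot work here, and the paper's proof uses a different amalgam for this reason: two copies of $2^{12}{:}\M_{24}$ intersecting in $2^{6+12}{:}3.S_6$. The point is that in Lemma~\ref{lemma:Co1} the characteristic is $2$, so $3^6$ is a $p'$-group and Clifford theory on its linear characters gives the monomial degree count via the orbit sizes $1+24+264+440$; for the present lemma the characteristics are $3$ and $5$, and one needs a normal \emph{$2$-subgroup} containing $Z(2.\Co_1)$ to play that role. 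Concretely, for $p=3$ your argument breaks in two places. First, $3^6$ is a $3$-group, so it acts trivially on every irreducible $\mathbb{F}_3$-module of (the preimage of) $3^6{:}2.\M_{12}$; the orbit count on its linear characters says nothing about $3$-modular restrictions, so the claimed shape $24^k+1^j$ is not forced and the subsequent block argument has no content. Second, $6.\Suz$ has \emph{no} faithful irreducible $3$-modular representations at all: its central subgroup of order $3$ is a normal $3$-subgroup and acts trivially on every irreducible in characteristic $3$. So the statement ``the faithful $3$-modular irreducibles of $6.\Suz$ of degree at most $74$ have degree $12$ or $24$'' is false, the composition factors of $V|_{6.\Suz}$ are really $2.\Suz$-modules, the restriction is not semisimple over the central $3$, and the constituent bookkeeping you rely on collapses.

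Even for $p=5$, where $3^6$ is at least a $p'$-group, the proposal is not tight: you allow degree-$12$ constituents on $6.\Suz$ but never explain how they are excluded or how the restriction to the $3$-local subgroup is then forced to be $24^k+1^j$; moreover trivial constituents cannot occur in any restriction anyway, since the central involution of $2.\Co_1$ acts as the scalar $-1$ on all of $V$, so the ``$j=0$'' block argument is a leftover from the centerless $\Co_1$ case rather than an adaptation to $2.\Co_1$. The paper's choice of $2^{12}{:}\M_{24}$ settles all of this at once: $2^{12}$ is a $p'$-group for $p\in\{3,5\}$ and contains the centre, the $\M_{24}$-orbits $1+24+276+1771+2024$ on its linear characters show that the only faithful irreducible of $2^{12}{:}\M_{24}$ of degree less than $276$ in odd characteristic is the monomial $24$, hence every relevant restriction is $24^k$; vanishing of self-extensions gives $k=1$, and uniqueness of the amalgamation over $2^{6+12}{:}3.S_6$ gives uniqueness of the representation. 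As written, your proof has a genuine gap for part (1) and an incomplete reduction for part (2); to repair it you would essentially have to switch to the $2$-local amalgam used in the paper (or supply a substantially more delicate non-semisimple analysis in characteristic $3$).
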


\begin{proof}
In each case, we can construct such a representation from 
two copies of $2^{12}{:}\M_{24}$, intersecting in $2^{6+12}{:}3.S_6$. 
Since the orbits of $\M_{24}$ on the linear characters of $2^{12}$ are
$1+24+276+1771+2024$, the only faithful 
irreducible representation of $2^{12}{:}\M_{24}$ 
of degree less than $276$
in odd characteristic is the monomial $24$. This remains irreducible on
restriction to $2^{6+12}{:}3.S_6$. 

Moreover, every relevant faithful irreducible representation of $2.\Co_1$
must have character restriction $24^k$ to each of these subgroups.
Since the irreducible $24$ for $2^{12}{:}\M_{24}$ has no
non-splitting self-extensions, the subgroups in question act
completely reducibly, implying that $k=1$.
Since every matrix that commutes with the action of
$2^{6+12}{:}3.S_6$ commutes with the action of both copies of 
$2^{12}{:}\M_{24}$,
there is a unique amalgamation of groups into $\GL_{24}(3)$, or 
$\GL_{24}(5)$, and so a unique representation of $2.\Co_1$ of dimension~$24$.
\end{proof}

Our proof for $(\J_4,2)$ is 
motivated by the approach of \cite[\S4.3.21]{Jan2005},
which in turn is based on \cite[Chap.~6]{Ben1981}.

\begin{lemma} \label{lemma:J4}
There is a unique faithful irreducible $2$-modular representation of $\J_4$ 
of degree at most $u(\J_4,2)=129$. This representation has degree $112$.
\end{lemma}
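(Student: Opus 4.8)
The plan is to follow the strategy used for $\Co_1$ and $2.\Co_1$ in Lemmas~\ref{lemma:Co1} and~\ref{lemma:2Co1}, namely to reconstruct any small faithful irreducible $\mathbb{F}_2\J_4$-module as an amalgam of its restrictions to two suitable maximal subgroups whose intersection is large, and then to show the amalgamation is unique. First I would recall from the list of maximal subgroups of $\J_4$ in \cite{Atlas} two subgroups $H_1$ and $H_2$ whose union generates $\J_4$ and whose intersection $H_1\cap H_2$ is as big as possible; the natural candidates are the two maximal $2$-local subgroups $2^{11}{:}\M_{24}$ and $2^{1+12}.3.\M_{22}{:}2$, which together generate $\J_4$. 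I would then determine, for each $H_i$, the faithful absolutely irreducible $\mathbb{F}_2H_i$-modules of degree at most $u(\J_4,2)=129$. For $2^{11}{:}\M_{24}$ this amounts to analysing the orbits of $\M_{24}$ on the $2^{11}$ linear characters of the elementary abelian normal subgroup (the orbit lengths being $1,24,276,1771$), so that the only faithful constituent of degree below the next threshold is a monomial module of degree $276$ — and here the relevant point is to check carefully which small degrees are genuinely excluded and whether a degree of $112$ can only arise from the known Todd-type module; for $2^{1+12}.3.\M_{22}{:}2$ one similarly studies the Clifford theory over the extraspecial normal subgroup $2^{1+12}$, whose faithful irreducible has degree $2^6=64$, tensored with representations of the quotient $3.\M_{22}{:}2$.

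Next I would argue that any faithful irreducible $\mathbb{F}_2\J_4$-module $V$ with $\dim V\le 129$ restricts on each $H_i$ to a direct sum of copies of the distinguished small faithful module together with possibly trivial constituents, and then eliminate the trivial constituents and the multiplicity $k>1$ by the same block-theoretic and self-extension arguments as in the $\Co_1$ case: one shows the relevant small module lies outside the principal block of each $H_i$, so the principal-block parts of $V|_{H_1}$ and $V|_{H_2}$ agree and, being acted on trivially by a perfect subgroup, must vanish; and one checks via $\mathrm{Ext}^1_{\mathbb{F}_2H_i}$-vanishing that each $H_i$ acts completely reducibly on $V$, forcing a common $H_1\cap H_2$-invariant subspace that is $\J_4$-invariant, hence $k=1$ and $\dim V=112$. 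Finally, uniqueness follows because the two restrictions are each unique of their degree and the centraliser algebra of the $H_1\cap H_2$-action is a field (so there is essentially one way to glue the two representations inside $\GL_{112}(2)$), whence $\J_4$ has a unique such representation — the known $112$-dimensional one over $\mathbb{F}_2$.

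The main obstacle, and the reason \cite[\S4.3.21]{Jan2005} and \cite[Chap.~6]{Ben1981} are invoked, is the bookkeeping on the second subgroup: the normal subgroup of $2^{1+12}.3.\M_{22}{:}2$ is extraspecial rather than elementary abelian, so the Clifford-theoretic analysis is genuinely more delicate than the abelian case handled for $\Co_1$ and $2.\Co_1$, and one must be careful about which $\mathbb{F}_2$-forms of the degree-$64$ faithful module exist and how the outer $\mathbb{M}_{22}{:}2$ acts. A subsidiary difficulty is verifying the cohomological input — that the small faithful modules of $H_1$ and $H_2$ have no non-split self-extensions and that the module of interest is not in the principal block of either subgroup — which in practice rests on computations with the {\sc MOC} system \cite{HissJanLuxPar} or with the data in \cite{mod-atlas}, and on the known $2$-modular character tables of $\M_{24}$ and $3.\M_{22}{:}2$. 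Once these ingredients are in place, assembling them into the statement $u(\J_4,2)=129$ and the uniqueness of the $112$-dimensional module is routine, exactly parallel to Lemma~\ref{lemma:Co1}.
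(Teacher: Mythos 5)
Your overall plan---reconstruct $V$ from its restrictions to two large subgroups and show the amalgamation is unique---is in the spirit of Lemmas~\ref{lemma:Co1} and~\ref{lemma:2Co1}, but your choice of subgroups breaks the argument irreparably in characteristic $2$. The $\Co_1$ and $2.\Co_1$ proofs work because the normal subgroup of each chosen subgroup is a $p'$-group: the orbit count on its linear characters, the fact that the small faithful module lies outside the principal block, and the complete-reducibility/self-extension arguments all rest on that coprimality. For $\J_4$ at $p=2$ you propose the $2$-local subgroups $2^{11}{:}\M_{24}$ and $2^{1+12}.3.\M_{22}{:}2$. A normal $2$-subgroup acts trivially on every irreducible module in characteristic $2$ (its fixed space is a nonzero submodule), so $2^{11}{:}\M_{24}$ has \emph{no} faithful irreducible $\mathbb{F}_2$-modules at all, there is no Clifford-theoretic analysis over the linear characters of $2^{11}$ to invoke, and the extraspecial group $2^{1+12}$ has no $64$-dimensional faithful module over $\mathbb{F}_2$---that representation exists only in coprime characteristic. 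Consequently the core steps of your outline fail: the restriction of the true $112$-dimensional module to $2^{11}{:}\M_{24}$ is not a direct sum of a distinguished faithful module and trivials (its composition factors are $\M_{24}$-modules and the restriction is far from semisimple), the non-principal-block separation has no analogue, and complete reducibility of the action of these $2$-locals is false, so you cannot force multiplicity one, eliminate trivial constituents, or pin the dimension to $112$ this way.

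The paper's proof therefore takes a different route, following Benson: it uses the amalgam $K=U_3(11){:}2$ and $L=11^{1+2}{:}(5\times 2.S_4)$, intersecting in $H=11^{1+2}{:}(5\times 8{:}2)$, whose $2$-modular Brauer character tables are known. One shows $V|_K\sim 110+1^k$ and $V|_L\sim 110'\oplus(2+1^{k-2})$ using the classes of elements of order $3$ and $5$, then combines a $2$-block decomposition, explicit cohomology computations classifying the indecomposable $K$-modules with these constituents (all uniserial or near-uniserial with the $110$ in the middle), and a comparison of the rank of $1+z$ for a suitable involution $z\in H$ acting via $K$ and via $L$, to exclude $k=3$ and force $k=2$, i.e.\ $\dim_{\mathbb{F}_2}(V)=112$; uniqueness then follows from Benson's amalgamation argument. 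If you wish to salvage your strategy, you must pick subgroups whose relevant normal subgroups have odd order, not $2$-local ones; as written, the proposal does not prove the lemma.
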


\begin{proof}
Let $V$ be a faithful irreducible $2$-modular representation of $G:=\J_4$ 
of degree at most $129$. Recall 
that $G$ can be generated by subgroups
$K:=U_3(11){:}2$ and $L:=11^{1+2}{:}(5 \times 2.S_4)$, intersecting in 
$H:=11^{1+2}{:}(5 \times 8{:}2)$. Also, $K'=U_3(11)$ and $L$ generate $G$
and intersect in $H_1:=11^{1+2}{:}(5 \times 8)$;
see \cite[p.~63]{Ben1981}.
In particular, there exists an involution
$z\in H\setminus H_1$ where $z\in L$ and $K=\langle K',z\rangle$.

The $2$-modular Brauer character table of $K$ shows that $V$ has
character restriction $110+1^k$ to $K$. Similarly, $V$ has
character restriction $110'+\lambda$ to $L$, where $110'$ is one of the 
three irreducible Brauer characters of 
this degree, and $\lambda$ 
has constituents of degree $1$ and $2$.
By considering the unique conjugacy class of elements of order 3 in $G$,
we deduce that $k\geq 2$, the irreducible Brauer
character of degree $110$ occurring in the restriction to $L$
is one of the non-rational ones, and $\lambda$ has a unique
constituent of degree $2$ apart from $k-2$ linear ones. 

By considering the unique conjugacy class of elements of order 5 in $G$,
we deduce that the constituents of $\lambda$ are necessarily
rational, so are uniquely determined by their degree.
The $2$-modular Brauer character table of $H$ shows that 
the irreducible Brauer characters of degree $110$ of $K$ and $L$
restrict irreducibly to $H$. A consideration of $2$-blocks yields 
the block decomposition of the various restrictions as 
$$ V|_K\sim (110+1^k)
\quad \text{and} \quad 
V|_L\sim 110'\oplus(2+1^{k-2}) 
\quad \text{and} \quad 
V|_H\sim 110\oplus(1^k) .$$
 
Hence there is a unique $L$-submodule $U_{110}$ of $V$ of dimension $110$, 
which cannot be $K$-invariant, so the unique
$110$-local $K$-submodule of $V$ is reducible.
(Here we freely use terminology from \cite{LMR}.)
By computing suitable cohomology groups, we determine 
all possible downward $K$-extensions of $110$ with kernel
having only trivial constituents. This shows that the local submodule in 
question, say $U_{111}$, is uniserial with descending composition
series $110/1$. 
In particular, this specifies a trivial $K$-submodule $U_1$ of $V$.
Moreover, $(V/U_{111})|_K\sim 1^{k-1}$; thus $K'$, being perfect,
acts trivially on this quotient. 

Observe that $[L{:}H]=3$,
where the action of $L$ on the cosets of $H$ is equivalent to 
the natural action of $S_3\cong S_4/V_4$; thus the associated
permutation module is semisimple of shape $2\oplus 1$.
Since $U_1$ cannot be $L$-invariant, 
the $L$-submodule $U_1^L$ generated by $U_1$
is either irreducible of degree $2$, or semisimple of shape $2\oplus 1$.
Hence we get an $L$-submodule $110'\oplus U_1^L$, which contains $U_{111}$,
and so is $K'$-invariant. Hence $110'\oplus U_1^L=V$,
implying that $\dim_{\mathbb{F}_2}(V)\in\{112,113\}$;
in other words, $k=2$ or $k=3$.

Let $X\leq V$ be the image of the action of $1+z$,
where we view the latter as an element of
both $\mathbb{F}_2K$ and $\mathbb{F}_2L$.
Since $1+z$ has an image of dimension $55$ on $110$,
both cases for $V|_L$ yield $\dim_{\mathbb{F}_2}(X)=56$, where
$X$ intersects non-trivially with the irreducible direct summand $2$.

Next, again using a cohomological approach, we determine all indecomposable
$K$-modules (up to isomorphism) having the constituent $110$ once, and
the trivial constituent $1$ with multiplicity at most $3$, where we 
restrict to those modules having $110$ neither in their head nor 
in their socle. This yields two isomorphism types $V_{112}$ and $V_{112'}$
of modules, both uniserial with descending composition series $1/110/1$,
a module $V_{113}$ of dimension $113$ with head of shape $1\oplus 1$
and socle of shape $1$, and the dual $V_{113}^\ast$ of $V_{113}$. 
In all cases the unique $110$-local
submodule is isomorphic to $U_{111}$.

Suppose that $k=3$. If $V|_K = V_{113}$, for which $1+z$ 
has image of dimension $56$, then $X\leq U_{111}$.
This implies that $110'\oplus 2$ contains $U_{111}$. Since 
the latter coincides with the radical of $V_{113}$, we conclude
that $110'\oplus 2$ is $K'$-invariant, a contradiction.
Similarly, if $V|_K = V_{113}^\ast$, then, since $V|_L$ is self-dual,
we obtain a contradiction by dualising the picture.
Finally, let $V|_K$ have a direct summand isomorphic to $V_{112}$
or $V_{112'}$. Now $1+z$ has an image of dimension $55$ 
on $V_{112'}$, say, and $1+z$ has an image of dimension $56$ 
on $V_{112}$. Hence $V|_K=V_{112}\oplus 1$, where again
$X\leq U_{111}$, yielding a contradiction as above.
This excludes the case $k=3$. 

Hence $k=2$, so $\dim_{\mathbb{F}_2}(V)=112$. By the above, $V|_K=V_{112}$.
The arguments of \cite[pp.\ 63ff.]{Ben1981}
now imply that $V$ is uniquely determined up to isomorphism.
\end{proof}

%%%%%%%%%%%%%%%%%%%%%%%%%%%%%%%%%%%%%%%%%%%%%%%%%%%%%%%%%%%%%%%%%%%%%%%%%%%%%
\section{Comments on computations} \label{s: comp}

We usually used the 
{\sc Atlas} database \cite{web-atlas} 
to access explicit matrix and permutation representations 
on standard generators, 
and straight line programs on these for conjugacy class representatives.
In {\sf GAP} we accessed this data 
through the {\sc AtlasRep} package \cite{AtlasRep}.
The ordinary and Brauer character tables from
\cite{Atlas,BAtlas,mod-atlas} are available through the
Character Table Library \cite{CTblLib} of {\sf GAP}.

We used the {\sc Orb} package \cite{ORB} available 
through {\sf GAP}. It has highly optimised techniques 
to enumerate orbits of vectors or subspaces in a $G$-module $V$.
It can be used directly to enumerate a $G$-orbit point-by-point.
But a critical feature is that it can also enumerate a $G$-orbit 
in larger pieces consisting of suborbits with respect to a helper 
subgroup $U\leq G$.  During the enumeration process, to recognise quickly
whether a $U$-suborbit has been encountered before, helper $U$-sets,
homomorphic images of the given ones, are used; 
for example, if the $G$-orbit consists of vectors in a 
$G$-module $V$, then the helper $U$-set may consist 
of the vectors in an epimorphic image of the $U$-module $V|_U$.
To fully utilise the helper $U$-sets, these must be enumerated
in turn, which is done using the same process, 
giving rise to a divide-and-conquer strategy.
A detailed account of this approach is given in \cite{MNW},
whose terminology we freely borrow here. 

{\sc Magma} has an implementation of 
an algorithm of Cannon and Holt \cite{exp-math} which 
constructs faithful irreducible representations 
defined over a given finite field of a finite permutation group; 
we used this to construct representations,
either all or those of specified degree, of certain small degree 
permutation groups. Occasionally, we used our implementation in {\sc Magma} 
of the algorithm of \cite{GlaLeeOBr2006} to conjugate a given representation 
to one defined over a subfield.

Applications of Lemma \ref{strong bound} 
require knowledge of conjugacy classes of $G$.
To compute these, we sometimes used the infrastructure of
\cite{BaaHolLeeOBr2015} available in {\sc Magma};
classes in $J_4$ were written down directly using the results of
\cite{Janko1976} as summarised at \cite{web-atlas}.

\section{Proof of Theorem \ref{thm:rg}}
\label{s:rg}

Let $G$ be an almost simple group whose socle $T$ is sporadic. 
First suppose that $T\neq \Mon$; we address this case in Lemma \ref{Monster}.

\begin{enumerate}
\item[(i)] 
Using explicit words given on standard generators from the {\sc Atlas} database, 
or the general purpose algorithm available in {\sf GAP}, we determine
representatives of conjugacy classes of $G$.

\item[(ii)] 
For each class representative $g$, 
we perform a random search through 
$g^G$ for a subset $S$ generating $G$ or $T$. 
If $G$ has a ``small degree" permutation representation,
then we check generation by $S$ directly.
For $\J_4,\,\HN,\,\Ly,\,\Th$ and $\Baby$, we use instead a 
faithful matrix representation and a different generation check:
we select a set of primes 
whose product divides the order of $T$, but of none of its maximal 
subgroups, and now search randomly 
in $\langle S \rangle$ for elements having these orders.
Hence, for each class representative $g$, we obtain an upper bound
$u(g)$ to $r(g)$. 

\item[(iii)]
Clearly $r(g)\geq 3$ if $g$ is an involution,
and $r(g)\geq 2$ otherwise. If $u(g)$ equals this lower bound,
then $r(g) = u(g)$.
This leaves unresolved the cases listed in Table~$\ref{tab:rg}$. 
(Since in all cases $u(g) = r(g)$, the random search
achieved the best possible outcome.)

\item[(iv)] 
Since no non-trivial element of $\Aut(G)$ stabilises a generating
$\ell(g)$-tuple of distinct elements of $g^G$, we deduce that 
$$ \prod_{i=0}^{\ell(g) - 1}(|g^G|-i)\geq|{\Aut(G)}|.$$
This provides a new lower bound $\ell(g)$ for $r(g)$, and resolves
the following cases where $\ell(g)=u(g)$:  
$$ 
(\J_2,\text{3A}),\, 
(\McL,\text{3A}),\, 
(\Co_1,\text{3A}),\, 
(\Fi_{22},\text{3A}),\, 
(\Fi_{23},\text{3A}),\, 
$$
$$
(\Fi_{24}',\text{3A}),\, 
(\Fi_{24}',\text{3B}),\, 
(\Fi_{24}'{:}2,\text{2C}),\, 
(\HN{:}2,\text{4D}),\, 
(\Ly,\text{3A}),\, 
(\Baby,\text{2A}).
$$

\item[(v)]
In most cases, a search through a set of 
representatives $g_1,\ldots,g_{u(g)-1}$ 
of the $G$-orbits on the set of $(u(g)-1)$-tuples of $g^G$ is feasible.
These are found readily as follows. Fixing $g_1:=g$, we let $g_2$ run through 
a set of representatives of the $C_G(g_1)$-orbits in $g^G$, for fixed $g_2$ we 
let $g_3$ run through a set of representatives of the $C_G(g_1,g_2)$-orbits 
in $g^G$, and so on.  We check directly the order of the 
subgroup generated by each tuple. This resolves the cases 

$$
(\M_{22}{:}2,\text{2B}),\, 
(\J_2,\text{2A}),\, 
(\HS,\text{4A}),\, 
(\HS{:}2,\text{2C}),\, 
(\Suz,\text{3A}),\, 
$$
$$
(\Co_2,\text{2A}),\, 
(\Fi_{22},\text{2A}),\, 
(\Fi_{22}{:}2,\text{2D}),\, 
(\Fi_{23},\text{2A}).
$$
\end{enumerate}

We now resolve the remaining cases.
\begin{lemma}
\label{lemma:fi22fi23}
\mbox{}
\begin{enumerate}
\item 
Let $G=\Fi_{22}$. If $g\in G$ is in class $\mathrm{3B}$, then $r(g)=3$.
\item 
Let $G=\Fi_{23}$. If $g\in G$ is in class $\mathrm{3B}$, then $r(g)=3$.
\end{enumerate}
\end{lemma}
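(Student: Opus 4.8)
The plan is to establish the lower bound $r(g)\geq 3$ by exhibiting, for each relevant $G$, a proper subgroup $M$ of $\langle T,g\rangle$ with $g\in M$ and $M$ normalising no non-trivial element on two $T$-conjugates of $g$---concretely, to show that any two $T$-conjugates of a 3B-element lie in a common proper subgroup. The most natural candidates are the maximal subgroups of $\Fi_{22}$ and $\Fi_{23}$ containing a Sylow 3-subgroup, or more precisely the 3-local subgroups: in $\Fi_{22}$ the normaliser of a 3B pure subgroup (the parabolic $O_7(3)$-type or the $3^{1+6}{:}2^{3+4}{:}3^2{:}2$ subgroup), and in $\Fi_{23}$ the corresponding large 3-local maximal subgroups ($O_8^+(3){:}S_3$, $3^{1+8}.2^{1+6}.3^{1+2}.2S_4$, etc.). The key observation is that a 3B-element has a relatively small conjugacy class compared with the larger classes, so that two random 3B-conjugates are quite likely to lie in a common maximal subgroup; one should look at the permutation character on the cosets of a suitable maximal subgroup and count fixed points.

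First I would record, from the {\sc Atlas} and the known character tables, the centraliser order $|C_G(g)|$ and hence $|g^G|$ for $g$ in class 3B, together with the list of maximal subgroups $M$ of $G$ whose order is divisible by the relevant power of $3$. Second, for each such $M$ I would compute (using the permutation character $1_M^G$, available via the character table library) the number of 3B-conjugates lying in $M$, i.e.\ $|g^G\cap M|$. Third, I would argue a counting/averaging bound: if for some maximal $M$ the quantity $|g^G\cap M|$ is large enough that for a fixed $g_1\in g^G$ there exists $g_2\in g^G$ with $g_1,g_2$ simultaneously conjugate into $M$, and moreover $M\neq\langle T,g\rangle$, then no two $T$-conjugates of $g$ generate, so $r(g)\geq 3$. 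This is essentially the fixed-point-counting argument: the number of pairs $(g_1,g_2)\in (g^G)^2$ with $\langle g_1,g_2\rangle\leq M^x$ for some $x$ is at least $|g^G|\cdot\frac{|g^G\cap M|^2}{|g^G|}$ summed appropriately, and one wants this to exceed a term that would force generation. Fourth, the matching upper bound $r(g)\leq 3$ comes for free from step (ii) of the main proof (the random search already produced a generating triple of 3B-conjugates), so only the lower bound needs work here.

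An alternative, and probably cleaner, route is purely structural: show directly that every 3B-element of $G$ lies in a unique (or at least a canonical) maximal 3-local subgroup $M$---for instance because $\langle g\rangle$ is characteristic in a larger normal $3$-subgroup whose normaliser is $M$---so that $C_G(g)$ or $N_G(\langle g\rangle)$ is contained in $M$; then any $T$-conjugate of $g$ commuting with or normalised alongside $g$ stays inside $M$, and one checks that for 3B two conjugates always have this property. This reduces the problem to a finite check inside $M$, which can be done computationally in {\sc Magma} or {\sf GAP} using the explicit representations from the {\sc Atlas} database. I would expect the main obstacle to be the $\Fi_{23}$ case: the group is large enough that a brute-force orbit computation on pairs of 3B-conjugates (as in step (v)) may be expensive, so one genuinely needs the permutation-character or structural argument rather than direct enumeration. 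Care is also needed to verify that the chosen $M$ is \emph{proper} in $\langle T,g\rangle$ and that the argument is uniform over the whole class 3B (not just the chosen representative), but both points follow once $M$ is taken to be a genuine maximal subgroup and the counting is done class-wise.
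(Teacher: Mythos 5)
Your upper bound $r(g)\leq 3$ does come for free from step (ii), exactly as in the paper. The gap is in the lower bound: $r(g)\geq 3$ is the universally quantified statement that \emph{no} pair of $T$-conjugates of $g$ generates $G$, and your counting step does not deliver it. From $|g^G\cap M|$ and the permutation character $1_M^G$ you can conclude that some (indeed many) pairs of 3B-conjugates lie in a common conjugate of $M$, but the existence of non-generating pairs says nothing about $r(g)$; you would need \emph{every} pair $(g_1,g_2)\in g^G\times g^G$ to lie in a proper subgroup, and a count of pairs covered by conjugates of maximal subgroups cannot certify this because of massive overcounting inside the $3$-locals (a total exceeding $|g^G|^2$ proves nothing). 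The inference ``if for some maximal $M$ there exists $g_2\in g^G$ with $g_1,g_2$ simultaneously conjugate into $M$, then no two $T$-conjugates of $g$ generate'' is a non sequitur.

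The structural alternative also fails, and the paper's own computation shows why: in $\Fi_{22}$ there are exactly five regular $C_G(g)$-orbits on $g^G$, i.e.\ pairs of 3B-conjugates with trivial joint centraliser, and these pairs generate $G_2(3)$ or $A_9$, subgroups that are not $3$-local. So the mechanism ``any two conjugates normalise a common non-trivial $3$-subgroup, hence lie in a canonical maximal $3$-local $M$'' is simply false for $\Fi_{22}$, and no argument routed through $C_G(g)\leq M$ or $N_G(\langle g\rangle)\leq M$ can reach those pairs. The step your plan is missing is the paper's reduction: a generating pair would have trivial joint centraliser (as $Z(G)=1$), hence would lie in a \emph{regular} $C_G(g)$-orbit on $g^G$; one then finds all such orbits by enumerating, with {\sc Orb} and chains of helper subgroups, roughly $91\%$ of the class of size $25\,625\,600$ in $\Fi_{22}$ and roughly $90\%$ of the $1\,252\,451\,200$ cosets of $N_G(\langle g\rangle)$ in $\Fi_{23}$. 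In $\Fi_{22}$ the five regular orbits are checked to generate only $G_2(3)$ or $A_9$; in $\Fi_{23}$ no regular orbit exists. Some exhaustive computation of this kind (or an argument that genuinely covers \emph{all} pairs) appears unavoidable, and your proposal does not supply it.
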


\begin{proof}
In each case we know from (ii) that $r(g)\leq 3$. 
Hence we must show that there is no $h\in g^G$ such that 
$\{g,h\}$ generates $G$. It suffices
to let $h$ run through a set of representatives of the 
$C_G(g)$-orbits on $g^G$. Moreover, if $\{g,h\}$ generates $G$,
then $C_G(g)\cap C_G(h)=Z(G)=\{1\}$, so $h$ belongs
to a regular $C_G(g)$-orbit. Thus it suffices to find 
representatives of the regular $C_G(g)$-orbits on $g^G$.  
The latter are found, or their non-existence proved, 
by an application of {\sc Orb} with helper subgroups.
If there are relevant $C_G(g)$-orbits, then 
we determine the order of the subgroups thus generated, 
and verify that none is $G$. 

We summarise the details of our {\sc Orb} computations.

\begin{enumerate}
\item[(1)]
$G=\Fi_{22}$ has a faithful permutation representations on $3510$ points. 
The action of $G$ on its conjugacy class $g^G$,
which has size $25\,625\,600$, is equivalent to its action on the
cosets of $C_G(g)\cong 3^{1+6}.2^{3+4}.3^2$, where 
$N_G(\langle g\rangle)\cong 3^{1+6}.2^{3+4}.3^2.2$
is a maximal subgroup of $G$. 
There is a vector $v$ 
in the absolutely irreducible $\mathbb{F}_3G$-module $V$ of dimension $924$
which is fixed precisely by $C_G(g)$, so $v^G$ is equivalent to $g^G$ 
as a $G$-set.
Using the permutation character of the action of $G$ on the cosets of 
$C_G(g)$, we find that $C_G(g)$ has $64$ orbits in $v^G$. 
We use the chain of helper subgroups
$\{1\}=U_0<U_1=U_2<U_3=G$ specified in Table \ref{tbl:helpfi22},
where we also list the dimension $d_i$ of the various helper quotients of $V$.
In particular, we choose $U_1=U_2=C_G(g)$, but use distinct helper quotients.
To find all regular $C_G(g)$-orbits in the $G$-orbit $v^G$, 
we must enumerate at least $1-|C_G(g)|/|v^G|\sim 90\%$ of it.
Enumerating a total of $23\,471\,749$ vectors in $v^G$, that is $\sim91\%$
of $v^G$, we find $37$ orbits, precisely $5$ of which are regular 
$C_G(g)$-orbits.
Translating back to $g^G$, we find that none gives rise to a
two-element generating set of $G$, generating instead
either $G_2(3)$ or $A_9$.

%%%%%%%%%%%%%%%%%%%%%%%%%%%%%%%%%%%%%%%%%%%%%%%%%%%%%%%%%%%%%%%%%%%%%%%%%%%
\begin{table}[!ht]\renewcommand{\baselinestretch}{1.1}\selectfont
\centering
$\begin{array}{rrrrr}
\hline 
i & U_i & |U_i| & [U_i {:} U_{i-1}] & d_i \\
\hline
3 & \Fi_{22} & 64\,561\,751\,654\,400 & 25\,625\,600 & 924 \\
2 & 3^{1+6}.2^{3+4}.3^2 & 2\,519\,424 & 1 & 17 \\
1 & 3^{1+6}.2^{3+4}.3^2 & 2\,519\,424 & 2\,519\,424 & 5 \\
\hline
\end{array}$
\caption{Helper subgroups for $\Fi_{22}$}\label{tbl:helpfi22}
\end{table}
%%%%%%%%%%%%%%%%%%%%%%%%%%%%%%%%%%%%%%%%%%%%%%%%%%%%%%%%%%%%%%%%%%%%%%%%%%%

\item[(2)]
$G=\Fi_{23}$ has a faithful permutation representations on $31671$ points.
The action of $G$ on its conjugacy class $g^G$,
which has size $2\,504\,902\,400$, is equivalent to its action on the
cosets of $C_G(g)\cong 3^{1+8}.2^{1+6}.3^{1+2}.2A_4$, where 
$N_G(\langle g\rangle)\cong 3^{1+8}.2^{1+6}.3^{1+2}.2S_4$
is a maximal subgroup of $G$. 
There is a vector $v$ 
in the absolutely irreducible $\mathbb{F}_3G$-module $V$ of dimension $528$
which is fixed precisely by $N_G(\langle g \rangle)$, so $v^G$ has length 
$1\,252\,451\,200$, and is equivalent to the
action of $G$ on the cosets of $N_G(\langle g \rangle)$. 
Since a regular $C_G(g)$-orbit in
$g^G$ implies a $C_G(g)$-orbit in $v^G$ of length divisible by $|C_G(g)|/2$,
we must find representatives of the latter. Using the permutation character of the action of $G$ on the cosets of 
$N_G(\langle g \rangle)$, we find that $C_G(g)$ has $37$ orbits in $v^G$. 
We use the chain of helper subgroups
$\{1\}=U_0<U_1<U_2<U_3<U_4<U_5=G$
specified in Table \ref{tbl:helpfi23},
where we also list the dimension $d_i$ of the various helper quotients of $V$.
In particular, we choose $U_4=C_G(g)$, and 
use the same helper quotient for all the helper subgroups.
To find all relevant $C_G(g)$-orbits in the $G$-orbit $v^G$,
we must enumerate at least $1-|C_G(g)|/(2\cdot|v^G|)\sim 35\%$ of it.
Enumerating a total of $1\,157\,675\,328$ vectors in $v^G$, 
that is $\sim 90\%$ of $v^G$, 
we find $13$ orbits, none of which has length divisible by $|C_G(g)|/2$. \qedhere

%%%%%%%%%%%%%%%%%%%%%%%%%%%%%%%%%%%%%%%%%%%%%%%%%%%%%%%%%%%%%%%%%%%%%%%%%%%
\begin{table}[!htbp]\renewcommand{\baselinestretch}{1.1}\selectfont
\centering
$\begin{array}{rrrrr}
\hline 
i & U_i & |U_i| & [U_i {:} U_{i-1}] & d_i \\
\hline
5 & \Fi_{23} & 4\,089\,470\,473\,293\,004\,800 & 2\,504\,902\,400 & 528 \\
4 & 3^{1+8}.2^{1+6}.3^{1+2}.2A_4 & 1\,632\,586\,752 & 24 & 13 \\
3 & 3^{1+8}.2^{1+6}.3^{1+2} & 68\,024\,448 & 27 & 13 \\
2 & 3^{1+8}.2^{1+6} & 2\,519\,424 & 128 & 13 \\
1 & 3^{1+8} & 19\,683 & 19\,683 & 13 \\
\hline
\end{array}$
\caption{Helper subgroups for $\Fi_{23}$}\label{tbl:helpfi23}
\end{table}
%%%%%%%%%%%%%%%%%%%%%%%%%%%%%%%%%%%%%%%%%%%%%%%%%%%%%%%%%%%%%%%%%%%%%%%%%%%
\end{enumerate}
\end{proof}

\begin{lemma}\label{Monster}
If $g \in \Mon$ is in class $\mathrm{2A}$ or $\mathrm{2B}$ then $r(g)=3$; 
all other non-trivial elements satisfy $r(g) = 2$.
\end{lemma}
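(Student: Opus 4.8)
The plan is to reduce the statement to a search inside an explicit small faithful representation of $\Mon$ --- the $196882$-dimensional module over $\mathbb{F}_2$, or the $196883$-dimensional one over $\mathbb{F}_3$ --- accessed through the \textsc{Atlas} data, with representatives of the conjugacy classes obtained from straight line programs on standard generators, just as for the smaller groups treated above. Since $\Out(\Mon)=1$ and $M(\Mon)=1$, the number $r(g)$ is simply the least number of $\Mon$-conjugates of $g$ that generate $\Mon$. The classes $\mathrm{2A}$ and $\mathrm{2B}$ account for all involutions of $\Mon$, and any involution satisfies $r(g)\geq 3$ because a group generated by two involutions is dihedral, while every other non-trivial element satisfies $r(g)\geq 2$ trivially. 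Hence it suffices to exhibit, for a representative $g$ of each non-involution class, two $\Mon$-conjugates of $g$ generating $\Mon$, and for each of $\mathrm{2A}$ and $\mathrm{2B}$ three such conjugates; the stated values then follow from these lower bounds.

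For a fixed class representative $g$ we would run a random search: choose $x$ (and, in the two involution cases, also $y$) uniformly at random in $\Mon$, set $S=\{g,g^{x}\}$ (respectively $S=\{g,g^{x},g^{y}\}$), and test whether $\langle S\rangle=\Mon$; one expects a good choice to appear after a few attempts. As enumeration in $\Mon$ is out of the question, we carry out the generation test by the same device used for $\J_4$, $\HN$, $\Ly$, $\Th$ and $\Baby$ above: fix a set $\Pi$ of primes dividing $|\Mon|$ whose product divides $|M|$ for no maximal subgroup $M<\Mon$, and then look inside $\langle S\rangle$ for elements whose orders realise every prime of $\Pi$; their existence forces $\prod_{p\in\Pi}p$ to divide $|\langle S\rangle|$, so that $\langle S\rangle=\Mon$. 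Here $\Pi=\{59,71\}$ already works: the only maximal subgroups of $\Mon$ with order divisible by $59$, respectively $71$, have socle $L_2(59)$, respectively $L_2(71)$, so no maximal subgroup --- hence no proper subgroup --- has order divisible by $59\cdot 71$; concretely, one searches for an element of order $59$ and an element of order $71$ in $\langle S\rangle$. Such elements can turn up only when $\langle S\rangle$ is already all of $\Mon$, so the certificate is never spurious, and an unsuccessful search merely calls for a new choice of $x$ (or of $x,y$). Once the required generating tuples are found we obtain $r(g)\leq 2$ (resp.\ $r(g)\leq 3$), and together with the lower bounds this gives the claim.

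There are two delicate points. First, matrix multiplication and order computations in the $196882$- or $196883$-dimensional representation of $\Mon$ are costly, so the searches must be organised with care and the generation certificates made as cheap as possible, which is exactly why a witness set as small as $\{59,71\}$ is desirable. Second --- and this is the real obstacle --- one must be sure that these witness primes genuinely certify generation even though the classification of maximal subgroups of $\Mon$ is not complete: this is dealt with by observing that every maximal subgroup of $\Mon$ not yet explicitly identified is almost simple with socle of very small order, in particular with order divisible by neither $59$ nor $71$, so that indeed no proper subgroup of $\Mon$ can have order divisible by $59\cdot 71$.
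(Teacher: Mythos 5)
Your overall logic is sound: the dihedral observation gives $r(g)\geq 3$ for the two involution classes, the trivial bound gives $r(g)\geq 2$ otherwise, and a pair (resp.\ triple) of conjugates containing elements of orders $59$ and $71$ would indeed certify generation, since no maximal subgroup of $\Mon$ --- known or potentially unknown --- has order divisible by $59\cdot 71$. But the core of your argument is an unperformed computation whose feasibility is exactly the obstacle that forces the paper to treat $\Mon$ separately from all other sporadic groups. Your plan requires, for each of the roughly $170$ non-involution classes, constructing a class representative, forming random conjugates, and then repeatedly multiplying and computing orders of matrices of size $196882$ over $\mathbb{F}_2$ (each such matrix occupies several gigabytes, and an order computation needs many products); moreover, the {\sc Atlas} data for $\Mon$ is not a representation one can load into {\sf GAP} or {\sc Magma} and manipulate like the representations used for $\J_4$, $\HN$, $\Ly$, $\Th$ and $\Baby$, and straight line programs reaching all conjugacy classes of $\Mon$ are not simply available as they are for the smaller groups. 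So the step ``run a random search and test generation inside $\langle S\rangle$'' is not a routine extension of the earlier method; as written it would not go through with the stated tools, and acknowledging that the computations are ``costly'' does not close this gap.

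The paper avoids any computation inside $\Mon$ altogether. It works with the ordinary character table: class multiplication coefficients show that $(\mathrm{X},\mathrm{X},\mathrm{59A})$ and $(\mathrm{X},\mathrm{X},\mathrm{71A})$ are non-zero for every class $\mathrm{X}$ other than $\mathrm{1A}$, $\mathrm{2A}$, $\mathrm{2B}$, which settles $r(g)=2$ for every non-involution class not fusing into both $\mathrm{L}_2(59)$ and $\mathrm{L}_2(71)$; the remaining classes $\mathrm{3B}$ and $\mathrm{5B}$ (with $\mathrm{6E}$ reduced to $\mathrm{3B}$ by squaring) are handled by counting pairs with product in $\mathrm{71A}$ and showing there are more such pairs than can lie inside the maximal $\mathrm{L}_2(71)$ subgroups; and the involution classes are dealt with via the non-vanishing of the structure constants for $(\mathrm{2B},\mathrm{2B},\mathrm{41A})$, $(\mathrm{2B},\mathrm{41A},\mathrm{71A})$ and $(\mathrm{2A},\mathrm{2A},\mathrm{5A})$, $(\mathrm{2A},\mathrm{5A},\mathrm{71A})$, using that $41$ (resp.\ the class $\mathrm{5A}$) does not occur in $\mathrm{L}_2(71)$. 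If you want a proof in the spirit of your certificate idea but actually executable, this character-theoretic route --- structure constants plus the same maximal subgroup information you already invoke --- is the way to realise it.
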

\begin{proof}
From the (almost complete) classification of the 
maximal subgroups of $\Mon$ (see \cite[Table 5.6]{wilson}), we deduce
that, for both $p=59$ and $71$,
there is a unique conjugacy class of maximal subgroups of order divisible
by $p$, the groups in question being isomorphic to $\text{L}_2(p)$. 
A consideration of class multiplication coefficients, 
computed from the ordinary character table of $\Mon$, shows
that these are non-zero for all conjugacy class triples 
$(\text{X},\text{X},\text{71A})$ and $(\text{X},\text{X},\text{59A})$,
where $\text{X}$ runs through all conjugacy classes except 
$\text{1A}$, $\text{2A}$, and $\text{2B}$.
Hence $r(g) = 2$ for all conjugacy classes containing elements 
$g$ of order at least $3$ and not fusing into both $\text{L}_2(59)$ 
and $\text{L}_2(71)$. 

This leaves the non-involutory conjugacy classes
$\text{3B}$, $\text{5B}$, and $\text{6E}$. Since the squares
of the elements of $\text{6E}$ belong to 
$\text{3B}$, it suffices to deal with the first two.
For each conjugacy class, $\text{X}$ say,
we compute the number of pairs of elements of $\text{X}$ 
whose product belongs to $\text{71A}$,
and compare this with the number of such pairs contained in some 
maximal subgroup isomorphic to $\text{L}_2(71)$. In each case 
there are (many) more pairs in $\Mon$ than are accounted for by these
maximal subgroups. This implies that $r(g)=2$ for 
elements $g$ belonging to either 
$\text{3B}$ or $\text{5B}$.

It remains to consider the involutory conjugacy classes 
$\text{2A}$ and $\text{2B}$.
The class multiplication coefficients associated 
with the conjugacy class triples $(\text{2B},\text{2B},\text{41A})$
and $(\text{2B},\text{41A},\text{71A})$ are both non-zero.
Similarly, the class multiplication coefficients 
associated with the conjugacy class triples $(\text{2A},\text{2A},\text{5A})$
and $(\text{2A},\text{5A},\text{71A})$ are both non-zero; moreover
conjugacy class $\text{5A}$ does not fuse into $\text{L}_2(71)$.
This implies that $r(g)=3$ for elements $g$ belonging to either 
$\text{2A}$ or $\text{2B}$.
\end{proof}

%%%%%%%%%%%%%%%%%%%%%%%%%%%%%%%%%%%%%%%%%%%%%%%%%%%%%%%%%%%%%%%%%%%%%%%%%%%%%
\section{Proof of Theorem \ref{regular}}
\label{s:proofs}

Let $T$ be a sporadic simple group, and let $G$ be a 
covering group of an almost simple group with socle $T$.
Let $V$ be a faithful irreducible $\mathbb{F}_pG$-module 
where $p$ is a prime dividing $|G|$. 
Let $k:=\End_{\mathbb{F}_pG}(V)$. 
Note that $p\nmid |Z(G)|$ since $Z(G)\leq k^*$. 

Suppose that $G$ has no regular orbit on $V$, so $b(G)>1$. 
Let $m(G,p)$ denote the minimal dimension of a faithful irreducible
representation of $G$ in characteristic $p$.
Let $u(G,p)$ be as defined in Lemma \ref{general bound}. Recall that $G'$ denotes the derived subgroup of $G$. Observe that 
$$ m(G',p) \leq m(G,p)\leq \dim_k(V)\leq \dim_{\mathbb{F}_p}(V)\leq u(G,p). $$

\begin{lemma}
\label{lemma:cases}
 $(G,p,\dim_{\mathbb{F}_p}(V))$ is listed in  Table $\ref{tab:cases}$.
\end{lemma}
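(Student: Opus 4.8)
The plan is to enumerate all triples $(G,p,\dim_{\mathbb{F}_p}(V))$ satisfying the chain of inequalities
$$ m(G',p) \leq \dim_k(V)\leq \dim_{\mathbb{F}_p}(V)\leq u(G,p) $$
established just before the statement, where $G$ ranges over the covering groups of almost simple groups with sporadic socle $T$ and $p$ over the primes dividing $|G|$. The key inputs are: the values of $r(g)$ from Theorem~\ref{thm:rg}, which feed into the definition of $u(G,p)$ in Lemma~\ref{general bound}; the lower bounds $m(G',p)$ coming from \cite{Jan2005}; and, where available, the $p$-modular Brauer character tables (from \cite{BAtlas,mod-atlas}) which pin down the possible dimensions $\dim_k(V)$ lying in the interval $[m(G',p),u(G,p)]$.

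First I would, for each sporadic $T$ and each relevant covering group $G$ of $T$ or $\Aut(T)$, and each prime $p\mid|G|$, compute $u(G,p)$ directly from the ordinary character table of $G$ (available in \cite{CTblLib}): run over a set $X$ of representatives of the non-central prime-order classes, substitute the class sizes $|g^G|$ and the now-known values $r(g)$, and solve for the largest integer $u(G,p)$ with $\sum_{g\in X}|g^G|(1/p)^{u(G,p)/r(g)}\geq 1$. Second, I would compare $u(G,p)$ with $m(G',p)$ from \cite{Jan2005}: whenever $u(G,p)<m(G',p)$, the interval is empty and $G$ has a regular orbit on every faithful irreducible $\mathbb{F}_pG$-module, so that pair is discarded. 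Third, for the surviving pairs with a known $p$-modular Brauer character table, I would read off the absolutely irreducible degrees $\leq u(G,p)$, translate these into the dimensions $\dim_{\mathbb{F}_p}(V)=[k:\mathbb{F}_p]\cdot\dim_k(V)$ using the field-of-definition analysis recalled in~\S\ref{s: prelim} (computing $k=\mathbb{F}_p(\{\chi(g)\})$ from the Brauer character), discarding any whose $\mathbb{F}_p$-dimension exceeds $u(G,p)$, and also discarding any module that is not faithful (keeping only those on which $Z(G)$ acts faithfully, which forces $p\nmid|Z(G)|$). Finally, for the four pairs $(\J_4,2)$, $(\Co_1,2)$, $(2.\Co_1,3)$, $(2.\Co_1,5)$ where the Brauer table is unknown, I would invoke Lemmas~\ref{lemma:Co1},~\ref{lemma:2Co1} and~\ref{lemma:J4}, which already identify the unique faithful irreducible of degree $\leq u(G,p)$ in each case.

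The main obstacle is bookkeeping rather than conceptual: the computation of $u(G,p)$ must be carried out for every one of the (several dozen) relevant covering groups and every prime dividing the order, and for the borderline cases the interval $[m(G',p),u(G,p)]$ may contain several Brauer-irreducible degrees, each of which must be correctly expanded over $\mathbb{F}_p$ and checked for faithfulness; subtleties arise for the non-absolutely-irreducible modules (those marked ${}^\sharp$ in Table~\ref{tab:totalex}) and for the groups $2.T.2^\pm$ whose Brauer tables must be derived as in \cite[Chap.~6]{Atlas}. I would organise the verification group-by-group following Table~\ref{tab:sporadic}, recording in Table~\ref{tab:cases} exactly those triples that survive all the filters; by construction this list contains every pair on which $G$ has no regular orbit, and it remains (in the subsequent lemmas) to determine $b(G)$ for each and to eliminate those that in fact do admit a regular orbit.
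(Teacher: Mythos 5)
Your proposal is correct and follows essentially the same route as the paper: compute $u(G,p)$ from the ordinary character table and Theorem~\ref{thm:rg}, discard pairs with $m(G',p)>u(G,p)$ using \cite{Jan2005}, handle the four dimension-gap cases via Lemmas~\ref{lemma:Co1}--\ref{lemma:J4}, and otherwise read the possible dimensions from the $p$-modular Brauer character tables (with the $\mathbb{F}_p$-realisation analysis of \S\ref{s: prelim}). The extra bookkeeping you describe (field of definition, faithfulness, the $2.T.2^\pm$ tables) is exactly what the paper's computation carries out implicitly.
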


\begin{proof}
We use the ordinary character table of $G$ and Theorem~\ref{thm:rg} to compute 
the upper bound $u(G,p)$. By \cite{Jan2005}, $m(G',p)$ is known. If $m(G',p)>u(G,p)$, then we have a contradiction. 
If $(G,p)$ is one of  $(\J_4,2)$, $(\Co_1,2)$, $(2.\Co_1,3)$ or $(2.\Co_1,5)$, 
then $\dim_{\mathbb{F}_p}(V)$ is determined  by 
Lemmas \ref{lemma:Co1}--\ref{lemma:J4}.
Otherwise,  
we use the  $p$-modular Brauer character table of $G$ 
to determine the possibilities for $\dim_{\mathbb{F}_p}(V)$. 
\end{proof}

We adopt several conventions in Table \ref{tab:cases}. The dimensions in bold are precisely those listed in Table \ref{tab:totalex}. We write $d^{(m)}$ when   there are exactly $m$ $d$-dimensional $\mathbb{F}_pG$-modules and $m>1$, except for the case $(G,p,\dim_{\mathbb{F}_p}(V))=(\M_{11},3,10)$. Here we write $\bf 10$ and $10^{(2)}$; the one in bold 
denotes the unique faithful irreducible $10$-dimensional $\mathbb{F}_3\M_{11}$-module 
with the property that  
an involution, viewed as an element of $\GL_{10}(3)$, has trace $-1\in \mathbb{F}_3$.

%%%%%%%%%%%%%%%%%%%%%%%%%%%%%%%%%%%%%%%%%%%%%%%%%%%%%%%%%%%%%%%%%%%%%%%%%%%%
\begin{table}[!ht]
\renewcommand{\baselinestretch}{1.1}\selectfont
\centering
\begin{tabular}{ c | c c c c c c c }
\hline
$G$ & $p=2$ & $p=3$ & $p=5$ & $p=7$ & $p=11$ & $p=13$ & $p=23$ \\
\hline
$\M_{11}$ & $\bf 10$ & $\bf 5^{(2)}$, $\bf 10$, $10^{(2)}$ & $10$ \\
$\M_{12}$ & $\bf 10$, $32$ & $\bf 10^{(2)}$, $15^{(2)}$ & $11^{(2)}$ \\
$\M_{12}{:}2$ & $\bf 10$, $32$ & $20$ \\
$2.\M_{12}$ & & $\bf 6^{(2)}$, $\bf 10^{(2)}$ & $12$ \\
$2.\M_{12}.2^+$ & & $\bf 10^{(4)}$, $\bf 12$ & & & $10^{(4)}$ \\
$2.\M_{12}.2^-$ & & $\bf 12$, $20^{(2)}$ \\
$\M_{22}$ & $\bf 10^{(2)}$, $34$ & $21$ \\
$\M_{22}{:}2$ & $\bf 10^{(2)}$, $34$ & $21^{(2)}$ \\
$2.\M_{22}$ & & $20$ & & $10$ & $10^{(2)}$ \\
$2.\M_{22}.2^+$ & & $20^{(2)}$ & & $10^{(2)}$ & $10^{(4)}$ \\
$2.\M_{22}.2^-$ & & $20^{(2)}$ \\
$3.\M_{22}$ & $\bf 12$, $30$ \\
$\M_{23}$ & $\bf 11^{(2)}$, $44^{(2)}$ & $22$ \\
$\M_{24}$ & $\bf 11^{(2)}$, $44^{(2)}$ & $22$ \\
$\J_1$ & $\bf 20$ & & & & $7$ \\
$\J_2$ & $\bf 12$, $28$, $36$ & & $14$ \\
$\J_2{:}2$ & $\bf 12$, $28$, $36$ & & $14^{(2)}$ \\
$2.\J_2$ & & $\bf 12$, $\bf 14$ & $\bf 6$, $14$ & $12$ \\
$2.\J_2.2^+$ & & $\bf 12$ & $12$ & $12$ \\
$2.\J_2.2^-$ & & $\bf 12$, $\bf 14^{(2)}$ & $12$ & $12$ \\
$3.\J_3$ & $\bf 18$, $36^{(2)}$ \\
$\J_4$ & $112$ \\
$\HS$ & $\bf 20$ & $22$ & $21$ \\
$\HS{:}2$ & $\bf 20$ & $22^{(2)}$ & $21^{(2)}$ \\
$\McL$ & $\bf 22$ & $\bf 21$ & $21$ \\
$\McL{:}2$ & $\bf 22$ & $\bf 21^{(2)}$ & $21^{(2)}$ \\
$\He$ & $51^{(2)}$ \\ 
$\Ru$ & $\bf 28$ \\ 
$2.\Ru$ & & & $28^{(2)}$ \\ 
$2.\Suz$ & & $\bf 12$ \\ 
$2.\Suz.2^+$ & & $\bf 12^{(2)}$ \\ 
$2.\Suz.2^-$ & & $\bf 24$ \\ 
$3.\Suz$ & $\bf 24$ \\ 
$6.\Suz$ & & & $24$ & $\bf 12^{(2)}$ & & $\bf 12^{(2)}$ \\ 
$\Co_3$ & $\bf 22$ & $\bf 22$ & $23$ & $23$ \\ 
$\Co_2$ & $\bf 22$ & $\bf 23$ & $23$ & $23$ & $23$ \\ 
$\Co_1$ & $\bf 24$ \\
$2.\Co_1$ & & $\bf 24$ & $\bf 24$ & $\bf 24$ & $24$ & $24$ & $24$ \\ 
$\Fi_{22}$ & $78$ \\
$\Fi_{22}{:}2$ & $78$ \\
$3.\Fi_{22}$ & $54$ \\
\hline
\end{tabular}
\caption{$\mathbb{F}_pG$-modules with $\dim_{\mathbb{F}_p}(V)\leq u(G,p)$}
\label{tab:cases}
\end{table}

\begin{lemma}
\label{lemma:discard}
If $\dim_{\mathbb{F}_p}(V)$ is not bold in  Table $\ref{tab:cases}$, then $b(G)=1$.
\end{lemma}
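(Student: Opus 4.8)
The plan is to show that for each pair $(G,p)$ and each non-bold dimension $d$ appearing in Table~\ref{tab:cases}, the group $G$ \emph{does} have a regular orbit on a faithful irreducible $\mathbb{F}_pG$-module $V$ of dimension $d$; equivalently, by definition, $b(G)=1$ on such a module. The key tool is Lemma~\ref{strong bound} with $n=1$: if for some faithful $\mathbb{F}_pG$-module $V$ of dimension $d$ one has
\begin{equation*}
|V|=p^d>\sum_{g\in X}|g^G|\,|C_V(g)|,
\end{equation*}
where $X$ is a set of representatives of the conjugacy classes of elements of prime order in $G$, then $b(G)=1$. So the first step is, for each relevant $(G,p,d)$, to identify the module(s) $V$ (using the $p$-modular Brauer character table of $G$, together with the field-of-definition analysis from \S\ref{s: prelim} when $V$ is not absolutely irreducible), and to read off $\dim_k C_V(g)$ for each prime-order class representative $g$ from the Brauer character value $\chi(g)$ via the standard formula relating fixed-space dimension to the multiplicity of the eigenvalue $1$.

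Second, I would assemble the data: from the ordinary and Brauer character tables in \cite{Atlas,BAtlas,mod-atlas} (available through \cite{CTblLib}) one obtains $|g^G|$ and $\chi(g)$ for all $g\in X$, hence $|C_V(g)|=p^{\dim_{\mathbb{F}_p}C_V(g)}$, and then one simply evaluates the inequality above. In the great majority of the non-bold cases the dimension $d$ is large relative to the centraliser dimensions (the dominant term being the class of involutions or of elements of order $p$, for which the fixed space has dimension roughly $d/2$ or $d(1-1/p)$), so the inequality holds comfortably and $b(G)=1$ follows immediately. This is a finite, bounded, and essentially routine computation carried out in {\sf GAP}~\cite{GAP4}; it disposes of all but a handful of borderline entries.

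Third, for the borderline cases---where the crude bound~\eqref{eqn:strong} with $n=1$ fails or is too close to call---I would fall back on an explicit computation in a concrete matrix or permutation representation of $G$ on $V$, using the {\sc Atlas} database \cite{web-atlas} accessed via \cite{AtlasRep}, together with the {\sc Orb} package \cite{ORB} for orbit enumeration with helper subgroups as described in \S\ref{s: comp}. Concretely, one enumerates a $G$-orbit of vectors in $V$ and exhibits a vector $v$ with trivial stabiliser, or, more efficiently, counts the vectors lying in some $C_V(g)$ (using the conjugacy class data and $|C_V(g)|$) and checks that these do not exhaust $V$. In cases where $V$ is only irreducible (not absolutely irreducible) over $\mathbb{F}_p$, one works with the absolutely irreducible constituents over $k=\End_{\mathbb{F}_pG}(V)$ and transfers the conclusion back, as in \S\ref{s: prelim}. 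The main obstacle is the largest of these modules---typically those for $\J_4$ (the $112$-dimensional module in characteristic $2$) and the $24$-dimensional modules for $\Co_1$ and $2.\Co_1$ in odd characteristic, and the $51$-dimensional He-module, $78$-dimensional $\Fi_{22}$-module and $54$-dimensional $3.\Fi_{22}$-module---where a naive point-by-point orbit enumeration is infeasible; there the divide-and-conquer strategy of \cite{MNW} with a carefully chosen chain of helper subgroups (analogous to Tables~\ref{tbl:helpfi22} and~\ref{tbl:helpfi23}) is needed to certify the existence of a regular vector, and most of the genuine work of this lemma lies in arranging those computations.
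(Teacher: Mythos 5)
Your proposal takes essentially the same route as the paper: the bulk of the non-bold entries are dispatched by verifying inequality \eqref{eqn:strong} with $n=1$ and invoking Lemma~\ref{strong bound}, and the few borderline cases (in the paper these are $(\M_{11},3,10)$, $(2.\M_{22},7,10)$, $(2.\M_{22}.2^+,7,10)$, $(\HS,3,22)$, $(\HS{:}2,3,22)$, $(\M_{24},3,22)$, $(3.\Fi_{22},2,54)$ and $(\Co_2,5,23)$) are settled by explicit computations, using {\sc Orb} with a helper subgroup where a direct search is infeasible. One minor caution: the Brauer character does not determine $\dim C_V(g)$ for elements of order $p$ (and even for $p'$-elements one needs the values on all powers of $g$, not just $\chi(g)$), so those terms in the sum must be obtained from explicit matrices, which your fallback to the {\sc Atlas} representations already supplies.
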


\begin{proof}
\mbox{} 
\begin{enumerate}
\item[(i)]
If $(G,p,\dim_{\mathbb{F}_p}(V))$ is one of 
$$ (\M_{11},3,10),\,(2.\M_{22}.2^+,7,10),\,(2.\M_{22},7,10),\,(\HS{:}2,3,22),\,(\HS,3,22),
$$
then we use {\sc Magma} to prove that $b(G)=1$.

\item[(ii)]
If $(G,p,\dim_{\mathbb{F}_p}(V))$ is 
$(\M_{24},3, 22)$ or $(3.\Fi_{22},2,54)$, then 
we use {\sc Orb} directly to prove that $b(G)=1$. 

\item[(iii)]
If $(G,p,\dim_{\mathbb{F}_p}(V))=(\Co_2,5,23)$, then  we use {\sc Orb} with 
helper subgroup $2^4 \times 2^{1+6}.A_8$ 
to prove that $b(G)=1$. 
\end{enumerate}
Otherwise, we use {\sc Magma} to prove that inequality 
(\ref{eqn:strong}) holds with $n=1$, 
in which case $b(G)=1$ by Lemma~\ref{strong bound}.
\end{proof}

\begin{lemma}
\label{lemma:keep}
If $\dim_{\mathbb{F}_p}(V)$ is bold 
in Table $\ref{tab:cases}$, then one of the following holds.
\begin{enumerate}
\item $(G,p,\dim_{\mathbb{F}_p}(V),b(G))$ is  correctly 
listed in Table~$\ref{tab:totalex}$.
\item $(G,p,\dim_{\mathbb{F}_p}(V))=(2.\Co_1,3,24)$ and $b(G)\in \{2,3\}$.
\item $(G,p,\dim_{\mathbb{F}_p}(V))=(2.\Co_1,7,24)$ and $b(G)\leq 2$.
\end{enumerate}
\end{lemma}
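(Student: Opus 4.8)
The plan is to run through the finitely many triples $(G,p,d)$ for which $d$ is printed in bold in Table~\ref{tab:cases} and, in each case, bound $b(G)$ both above and below until the value in Table~\ref{tab:totalex} is pinned down. For the lower bound $b(G)\geq 2$ --- which is what makes the triple genuinely belong in Table~\ref{tab:totalex} --- I would appeal to the standing hypothesis of this section, or, when an independent check is wanted, to Lemma~\ref{lowerbound}, which already yields it whenever $|G|>p^{d}-1$; for the few bold triples that this misses (such as $\J_1$ on its $20$-dimensional $\mathbb{F}_2$-module and some of the $\mathbb{F}_3$-modules for bicyclic extensions of $\M_{12}$ and $\J_2$) the module $V$ has at most a few million vectors, so I would enumerate it with {\sc Orb} and check directly that it is covered by the fixed spaces $C_V(g)$ with $g$ of prime order, i.e.\ that $G$ has no regular orbit on $V$.

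For the upper bounds the principal tool is Lemma~\ref{strong bound}. First I would compute, using the methods of \S\ref{s: comp}, a set $X$ of representatives of the conjugacy classes of elements of prime order in $G$, together with the class lengths $|g^G|$ and the fixed-space dimensions $\dim_{\mathbb{F}_p}C_V(g)$ --- the last read off from the $p$-modular Brauer character of $V$ when $g$ is $p$-regular, and computed from an explicit matrix for $g$ when $g$ has order $p$ --- and then test inequality~\eqref{eqn:strong}: if it holds with $n=2$ then $b(G)\leq 2$, and if it holds with $n=3$ then $b(G)\leq 3$. For the small number of triples where this test fails at the value of $n$ predicted by Table~\ref{tab:totalex}, I would instead use {\sc Orb} to construct an explicit base of the required size --- a pair $(v,w)$ with $G_v\cap G_w=1$, or a triple $(u,v,w)$ with $G_u\cap G_v\cap G_w=1$ --- which is feasible because after one or two orbit--stabiliser steps the pointwise stabiliser is small enough to inspect directly.

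For the entries with $b(G)=3$ I would additionally establish $b(G)\geq 3$ by showing that $G$ has no regular orbit on $V\oplus V$. For $(\M_{23},2,11)$, $(\M_{24},2,11)$, $(\Co_2,2,22)$, $(\Co_1,2,24)$, $(2.\Suz,3,12)$ and $(2.\Suz.2^+,3,12)$ this is immediate from Lemma~\ref{lowerbound} applied to $V\oplus V$, since $|G|>(p^{d}-1)^2$; for the remaining such triples --- the modules for $\M_{12}$, $\M_{12}{:}2$, $\M_{22}$, $\M_{22}{:}2$, $3.\M_{22}$ and $2.\M_{12}$ --- the group order is too small, but $V\oplus V$ has at most $2^{24}$ vectors, so I would enumerate it with {\sc Orb} and verify that no vector lies outside all the $C_V(g)^2$.

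The hard part will be the two triples $(2.\Co_1,3,24)$ and $(2.\Co_1,7,24)$, which are exactly the cases where only the weaker conclusions~(2) and~(3) are claimed: here $V$, and \emph{a fortiori} $V\oplus V$, is far too large to enumerate even with the helper-subgroup machinery of {\sc Orb}. For $p=3$ one still has $|G|>p^{24}-1$, so Lemma~\ref{lowerbound} gives $b(G)\geq 2$, and I expect \eqref{eqn:strong} to hold with $n=3$ but to fail narrowly with $n=2$ (the obstruction being the class of involutions whose fixed space on $V$ has dimension $16$), so the best one gets is $b(G)\in\{2,3\}$. For $p=7$, \eqref{eqn:strong} should already hold with $n=2$, giving $b(G)\leq 2$, but since $|G|<p^{24}-1$ and $V$ cannot be enumerated there is no independent proof of non-regularity available by these means, so only $b(G)\leq 2$ is recorded; settling the exact base size in these two cases is left to a separate argument.
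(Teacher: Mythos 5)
Your overall plan---lower bounds from Lemma~\ref{lowerbound} applied to $V$ or to $V\oplus V$, upper bounds from inequality~(\ref{eqn:strong}), explicit machine computations to close the remaining gaps, and the two modules for $2.\Co_1$ in characteristics $3$ and $7$ singled out for the weaker conclusions (2) and (3)---is essentially the strategy of the paper, and your handling of the $b(G)=3$ entries and of $(2.\Co_1,3,24)$ and $(2.\Co_1,7,24)$ matches what is actually done. The gap is in your treatment of the lower bound $b(G)\geq 2$ for the bold triples with $|G|\leq |V|-1$. First, appealing to ``the standing hypothesis of this section'' is not admissible: conclusion~(1) of the lemma asserts that the $b(G)$ column of Table~\ref{tab:totalex} is \emph{correct}, so for an entry such as $(\McL,3,21)$ one must genuinely prove that no regular orbit exists; assuming non-regularity would make the certification of exactly these rows circular (and is why the paper proves $b(G)=2$ unconditionally in these cases).

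Second, your quantitative fallback is off by several orders of magnitude. The bold triples missed by the test $|G|>p^{d}-1$ are not only $(\J_1,2,20)$, $(\M_{11},3,10)$ and the $\mathbb{F}_3$-modules of dimensions $12$ and $14$ for extensions of $\M_{12}$ and $\J_2$ (those indeed have at most a few million vectors and can be enumerated directly, as in parts (i)--(ii) of the paper's proof), but also $(\McL,3,21)$ and $(\McL{:}2,3,21)$ with $3^{21}\approx 10^{10}$ vectors, and $(6.\Suz,13,12)$ with $13^{12}\approx 2.3\cdot 10^{13}$ vectors. These cannot be enumerated point by point, and they are precisely the nontrivial content of part (v) of the paper's proof, where $\ell=1$ and $n=2$ and {\sc Orb} is run with carefully chosen helper subgroups ($\M_{11}$, $\M_{11}.2$ and $6.3^{(2+4)}.[48]$, respectively) to exclude a regular orbit. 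Without such a divide-and-conquer enumeration (or a structural argument of the kind used for $(2.\Co_1,7,24)$ in Lemma~\ref{co1mod7}), your proposal does not establish $b(G)\geq 2$ for these three entries, and hence does not prove conclusion~(1) for them.
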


\begin{proof}
Let $\ell:=\lceil (\log |G|)/\log (|V|-1)\rceil$, and recall that
$b(G)\geq\ell$ by Lemma \ref{lowerbound}. Let $n$ 
be the least positive integer such that (\ref{eqn:strong}) holds, 
so $b(G)\leq n$ by Lemma \ref{upperbound}.
For each relevant group we compute $n$ using {\sc Magma}.

\begin{enumerate}
\item[ (i)]
If $(G,p,\dim_{\mathbb{F}_p}(V))$ is one of  
$$ (\M_{11},3,5),\,(\M_{23},2,11),\,(\M_{24},2,11),\,(\J_2,2,12),\,
(\J_2{:}2,2,12), $$
$$ (3.\Suz,2,24),\,(\Co_3,2,22),\,(\Co_1,2,24), $$
then we use {\sc Magma} to prove that $b(G)=\ell$.

\item[(ii)]
If $(G,p,\dim_{\mathbb{F}_p}(V))$ is one of
$$ (\M_{11},3,10),\,(\M_{12},2,10),\,(\M_{12}{:}2,2,10),\,
(2.\M_{12},3,6),\,(2.\M_{12}.2^\pm,3,12),$$
$$
(\M_{22},2,10),\,(3.\M_{22},2,12),\,
(\J_1,2,20),\,(2.\J_2,3,14),\,(2.\J_2.2^-,3,14),$$
then $n>\ell$, and  we use {\sc Magma} to prove that $b(G)=n$.

\item[(iii)]
If $(G,p,\dim_{\mathbb{F}_p}(V))=(\M_{22}{:}2,2,10)$, then  $\ell=2$ and $n=4$.
We use {\sc Magma} to prove that $b(G)=3$. 

\item[(iv)] 
If $(G,p,\dim_{\mathbb{F}_p}(V))=(2.\Co_1,3,24)$, then $\ell=2$ and $n=3$, so (2) holds. Similarly, if $(G,p,\dim_{\mathbb{F}_p}(V))=(2.\Co_1,7,24)$, then $\ell=1$ and $n=2$, so (3) holds.

\item[(v)]
If $(G,p,\dim_{\mathbb{F}_p}(V))$ is one of  
$$ (\McL,3,21),\,(\McL{:}2,3,21),\,(6.\Suz,13,12), $$
then  $\ell=1$ and $n=2$. We use {\sc Orb}
and helper subgroups $\M_{11}, \M_{11}.2$, and 
$6.3^{(2+4)}.[48]$ 
respectively to prove that $b(G)=2$.
\end{enumerate}

In all other cases, $n=\ell$ so $b(G)=\ell$.
\end{proof}

\begin{lemma} \label{co1mod3}
If $(G,p,\dim_{\mathbb{F}_p}(V))=(2.\Co_1,3,24)$, then $b(G)=2$.	
\end{lemma}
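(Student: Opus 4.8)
The plan is to establish $b(G)\leq 2$; since Lemma~\ref{lemma:keep} already gives $b(G)\in\{2,3\}$, this forces $b(G)=2$. Because the inequality~(\ref{eqn:strong}) fails for $n=2$ (Lemma~\ref{lemma:keep}(iv) records that $n=3$ is the least exponent for which it holds), the crude counting bound of Lemma~\ref{strong bound} cannot close the gap, so instead I would exhibit a base of size $2$ directly. The key reduction is the elementary observation that $b(G)\leq 2$ if and only if there exists $v_1\in V$ such that the point stabiliser $H:=G_{v_1}$ has a regular orbit on $V$: given such a $v_1$, any $v_2$ in a regular $H$-orbit satisfies $G_{v_1}\cap G_{v_2}=H_{v_2}=1$, so $\{v_1,v_2\}$ is a base, and conversely a base $\{v_1,v_2\}$ places $v_2$ in a regular $H$-orbit. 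Thus the task becomes producing a single vector whose stabiliser is small enough to admit a regular orbit on $V$, even though $G$ itself does not.

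Two features make this feasible. First, $Z(G)=\langle z\rangle$ acts on $V$ as the scalar $-1\in\mathbb{F}_3$, so $C_V(z)=\{0\}$ and hence $z\notin G_{v}$ for every nonzero $v$; consequently every point stabiliser $G_{v}$ is isomorphic to a subgroup of $G/Z(G)=\Co_1$. Second, any subgroup $H\leq G$ of prime order automatically has a regular orbit on $V$: its non-trivial elements all generate $H$ and therefore share the single fixed space $C_V(H)$, which is a proper subspace since $H$ acts faithfully, so every vector outside $C_V(H)$ lies in a regular $H$-orbit. It therefore suffices to find a nonzero $v_1\in V$ with $|G_{v_1}|$ prime; more generally it suffices that $G_{v_1}$ be small enough that Lemma~\ref{strong bound} with $n=1$, applied to $H=G_{v_1}$ acting on $V|_H$, confirms a regular orbit, since the sum $\sum_{h}|h^{H}||C_V(h)|$ over prime-order classes of $H$ will fall far below $|V|=3^{24}$ once $|H|$ is small, even though the corresponding $n=2$ sum for $G$ fails to lie below $|V|^2$.

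I would realize this computationally using the {\sc Orb} package~\cite{ORB} with helper subgroups, exactly as in the proof of Lemma~\ref{lemma:fi22fi23}: enumerate the action of $G$ on $V$ far enough to locate a vector $v_1$ with a small stabiliser, recover $H=G_{v_1}$ from the Schreier generators produced during the enumeration, and verify that $|H|$ is prime. One then selects $v_2\notin C_V(H)$ and confirms $G_{v_1}\cap G_{v_2}=1$. The main obstacle is the size of the relevant $G$-orbit on $V$, which for a vector with near-trivial stabiliser has length close to $|G|\approx 8.3\times 10^{18}$; this is precisely the regime that the divide-and-conquer strategy of {\sc Orb} with a chain of helper subgroups is designed to handle, and the delicate point is certifying that the stabiliser of the chosen $v_1$ contains no unexpected fixing elements beyond the prime-order subgroup $\langle g\rangle$. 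An alternative that sidesteps enumerating the full orbit is to fix a prime-order $g$ with $C_V(g)$ comparatively large and to seek a vector in $C_V(g)$ lying in no other $C_V(h)$; whether such a vector exists is a counting question internal to $C_V(g)$, again amenable to Lemma~\ref{strong bound}.
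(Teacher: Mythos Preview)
Your high-level reduction (find $v_1$ with $H:=G_{v_1}$ admitting a regular orbit on $V$) is exactly right, and is what the paper uses too. But the concrete plan you build on it cannot work. You propose to search for a vector with prime-order stabiliser, and you estimate that the ``relevant $G$-orbit \ldots\ has length close to $|G|\approx 8.3\times 10^{18}$''. Compare this with $|V|=3^{24}\approx 2.8\times 10^{11}$: since every $G$-orbit lies inside $V$, one has $|G_{v}|\geq |G|/|V|\approx 3\times 10^{7}$ for \emph{every} nonzero $v$. There is no vector with prime, or even moderately small, stabiliser; the minimum possible order of a point stabiliser is in the tens of millions. So the primary strategy is impossible, and the {\sc Orb} search you sketch would never terminate with the target you set. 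Your fallback (``small enough for Lemma~\ref{strong bound} with $n=1$'') remains in play in principle, but now ``small'' means a group of order $\geq 3\times 10^{7}$ whose structure you know nothing about a priori; you have not identified such an $H$, nor argued that the $n=1$ inequality holds for it.

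The paper sidesteps the search entirely by choosing $H$ in advance. Inside the maximal subgroup $M\cong 2^{12}{:}\M_{24}$ one fixes a complement $H\cong\M_{24}$; the $\mathbb{F}_2H$-module structure of $O_2(M)$ (uniserial $11/1$) forces every subgroup of $M$ strictly containing $H$ to contain $Z(M)=Z(G)$, and the maximal-subgroup list shows $M$ is the unique maximal overgroup of $H$ up to conjugacy. Since $V|_H$ is the natural $24$-dimensional permutation module over $\mathbb{F}_3$ (uniserial $1/22/1$), $H$ fixes a nonzero vector $v$, and any strictly larger stabiliser would contain $Z(G)$, which fixes only $0$; hence $C_G(v)=H$ exactly. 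Finally, Lemma~\ref{lemma:discard} already records that $(\M_{24},3,22)$ has $b=1$, so the $22$-dimensional $H$-subquotient of $V$ has a regular vector, and lifting it gives $w\in V$ with $C_H(w)=1$. This is structural rather than computational: the only input beyond character-table data is the previously verified regular-orbit fact for $(\M_{24},3,22)$.
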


\begin{proof}
By Lemma \ref{lemma:keep}, $b(G)\geq 2$.
Let $M\cong 2^{12}{:} \M_{24}$ be a maximal subgroup of $G$ 
and let $H\cong \M_{24}$ be a fixed
complement of $O_2(M)\cong 2^{12}$. As an $\mathbb{F}_2H$-module,
$O_2(M)$ is isomorphic to the binary Golay code, and so is uniserial
with descending composition series $11/1$. Thus all subgroups of $M$
properly containing $H$ necessarily contain $Z(M)=Z(G)$. 
Moreover, by \cite{Atlas},
every maximal subgroup $\tilde M$ of $G$ containing $H$
is $G$-conjugate to $M$, so we may assume that $\tilde M=M$.

We show that there exist $v,w\in V$ such that
$C_G(v)=H$ and $C_H(w)=\{1\}$.
The restriction of $V$ to $H$ is isomorphic to the natural 
permutation $\mathbb{F}_3H$-module, which is uniserial with descending 
composition series $1/22/1$. Hence there exists $0\neq v\in V$ 
fixed by $H$. Every subgroup of $G$ properly 
containing $H$ necessarily contains $Z(G)$, so it does not fix $v$.
This shows that $C_G(v)=H$. 
Lemma \ref{lemma:discard}
shows that the irreducible $H$-subquotient
of dimension $22$ of $V$ has a regular vector.
Hence $b(G) = 2$.
\end{proof}

\begin{lemma} \label{co1mod7}
If $(G,p,\dim_{\mathbb{F}_p}(V))=(2.\Co_1,7,24)$, then $b(G)=2$.	
\end{lemma}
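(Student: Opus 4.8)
The plan is to mimic the strategy just used for the characteristic $3$ case in Lemma~\ref{co1mod3}, exploiting the same maximal subgroup $M\cong 2^{12}{:}\M_{24}$ and its complement $H\cong\M_{24}$. As noted there, $O_2(M)$ is a uniserial $\mathbb{F}_2H$-module with composition series $11/1$, so any subgroup of $M$ strictly containing $H$ contains $Z(M)=Z(G)$, and by the {\sc Atlas} every maximal subgroup of $G$ containing $H$ is $G$-conjugate to $M$. Hence any subgroup of $G$ properly containing $H$ contains $Z(G)$. By Lemma~\ref{lemma:keep}(3) we already know $b(G)\leq 2$, and since $G$ has no regular orbit on $V$ (otherwise this case would not appear in the table) we also have $b(G)\geq 2$; so it suffices to exhibit two vectors $v,w\in V$ with $C_G(v)\cap C_G(w)=\{1\}$, and the most economical route is again to find $v$ with $C_G(v)=H$ and $w$ with $C_H(w)=\{1\}$, giving $b(G)=2$.

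First I would determine the restriction $V|_H$. The $7$-modular Brauer character table of $\M_{24}$ shows that the $24$-dimensional representation of $2.\Co_1$ restricts to $\M_{24}$ as the natural permutation module over $\mathbb{F}_7$ (indeed $7\nmid 24$, so this module is $1\oplus 23$, with the $23$ the irreducible deleted permutation module). In particular $V|_H$ contains the trivial $\mathbb{F}_7H$-module, so there is a nonzero $v\in V$ fixed by $H$. Since every subgroup of $G$ properly containing $H$ contains $Z(G)=\langle -1\rangle$, which acts as $-1$ on $V$ and so moves $v$, we get $C_G(v)=H$ exactly as in Lemma~\ref{co1mod3}.

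It then remains to produce $w\in V$ with trivial $H$-stabiliser, i.e.\ a regular vector for $H=\M_{24}$ on $V|_H$. The cleanest way is to invoke the earlier analysis: $V|_H = 1\oplus W$ with $W$ the $23$-dimensional irreducible $\mathbb{F}_7\M_{24}$-module, and by Lemma~\ref{lemma:discard} the pair $(\M_{24},7,23)$ was discarded, meaning $\M_{24}$ has a regular orbit on $W$; pick $w$ in such an orbit. (Equivalently one can note that $b(\M_{24})$ in the $23$-dimensional module over $\mathbb{F}_7$ is $1$, which follows either from that lemma or from a direct application of Lemma~\ref{strong bound} with $n=1$ to $\M_{24}$ acting on $W$.) Then $C_G(v)\cap C_G(w)\leq H\cap C_G(w)=C_H(w)=\{1\}$, so $\{v,w\}$ is a base and $b(G)=2$.

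The only genuine obstacle is confirming the claim that $(\M_{24},7,23)$ really does admit a regular orbit, i.e.\ that it is covered by Lemma~\ref{lemma:discard}: one must check that $23\leq u(\M_{24},7)$ is false or, more directly, that the module $W$ appears among the ``non-bold'' cases handled there; if for some reason $\M_{24}$ on the $23$-dimensional $\mathbb{F}_7$-module is not explicitly treated, one falls back on verifying inequality~(\ref{eqn:strong}) with $n=1$ for $(\M_{24},7,23)$ directly, which is a short calculation with the ordinary character table of $\M_{24}$ (the largest fixed-point spaces come from the small-order classes $2\mathrm{A},2\mathrm{B},3\mathrm{A},\dots$, and $7^{23}$ comfortably dominates $\sum_{g\in X}|g^G|\,7^{\dim C_W(g)}$). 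Everything else is a direct transcription of the argument of Lemma~\ref{co1mod3}.
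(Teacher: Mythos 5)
Your argument establishes only the easy half of the lemma, and the half that actually matters is assumed circularly. You write that $b(G)\geq 2$ ``since $G$ has no regular orbit on $V$ (otherwise this case would not appear in the table)'', but Table~\ref{tab:totalex} is the conclusion of Theorem~\ref{regular}, and the whole content of Lemma~\ref{co1mod7} beyond Lemma~\ref{lemma:keep}(3) is precisely to prove that $2.\Co_1$ has no regular orbit on the $24$-dimensional $\mathbb{F}_7$-module. Note that the counting bound of Lemma~\ref{lowerbound} gives only $\ell=1$ here ($|2.\Co_1|<7^{24}-1$), so nothing cheap forces $b(G)>1$; a regular orbit is numerically entirely possible and must be excluded by hand. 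Your construction of $v$ with $C_G(v)=H\cong\M_{24}$ and of $w$ with $C_H(w)=\{1\}$ (via the decomposition $V|_H=1\oplus 23$, which is fine since $7\nmid 24$) only exhibits a base of size $2$, i.e.\ reproves $b(G)\leq 2$, which was already known from Lemma~\ref{lemma:keep}(3); a vector with nontrivial stabiliser says nothing about whether some other vector has trivial stabiliser.

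By contrast, the paper's proof is devoted almost entirely to the lower bound: it enumerates (via {\sc Orb} with a chain of helper subgroups) roughly $90\%$ of ${\mathbb P}(V)$ without finding a regular $G/Z(G)$-orbit, and then disposes of the remaining possibility of a vector $v$ with $C_G(v)=\{1\}$ but $C_G(\langle v\rangle)$ of order $6$ by observing that such a $v$ must be an eigenvector of an element $t$ of order $3$, analysing the eigenspaces $E_\omega(t)$ for the four classes $3A$--$3D$ (counting rules out $3A$, $3B$; explicit orbit enumeration rules out $3C$ and shows the unique candidate orbit for $3D$ has $1$-space stabiliser $Z(G)\times A_4$). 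None of this, or any substitute for it, appears in your proposal, so the claimed equality $b(G)=2$ is not established: you have shown $b(G)\leq 2$ twice and $b(G)\geq 2$ not at all.
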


\begin{proof}
By Lemma \ref{lemma:keep}, $b(G)\leq 2$.
To show that $G$ does not have a regular orbit on $V$, 
we use {\sc Orb} with 
the chain of helper subgroups
$\{1\}=U_0<U_1<U_2<U_3<U_4=G$ specified in Table \ref{tbl:help};
these are chosen so that $Z(G)<U_1$. 
The helper quotients of $V$, associated with the various helper subgroups, 
have dimension $d_i$. 

%%%%%%%%%%%%%%%%%%%%%%%%%%%%%%%%%%%%%%%%%%%%%%%%%%%%%%%%%%%%%%%%%%%%%%%%%%%
\begin{table}[!ht]\renewcommand{\baselinestretch}{1.1}\selectfont
\centering
$\begin{array}{rrrrr}
\hline 
i & U_i & |U_i| & [U_i {:} U_{i-1}] & d_i \\
\hline
4 & 2.\Co_1 & 8\,315\,553\,613\,086\,720\,000 & 46\,621\,575 & 24 \\
3 & (2\times 2_+^{1+8}).O_8^+(2) & 178\,362\,777\,600 & 270 & 16 \\
2 & (2^2.2^7).2^6.A_8 & 660\,602\,880 & 560 & 8 \\
1 & [2^{17}]{:} 3^2 & 1\,179\,648 & 1\,179\,648 & 8 \\
\hline
\end{array}$
\caption{Helper subgroups for $G$}\label{tbl:help}
\end{table}
%%%%%%%%%%%%%%%%%%%%%%%%%%%%%%%%%%%%%%%%%%%%%%%%%%%%%%%%%%%%%%%%%%%%%%%%%%%

Since the Cauchy-Frobenius Lemma shows that
there are $1097$ $G$-orbits on $V$, 
it is infeasible to enumerate sufficiently many (randomly chosen) 
orbits to rule out the existence of a regular orbit.
Instead, we consider the projective space ${\mathbb P}(V)$,
the set of $1$-spaces in $V$, which we view as a $G/Z(G)$-set.
Observe that $|{\mathbb P}(V)|=(7^{24}-1)/6\sim 3.2\cdot 10^{19}$;
to exclude a regular $G/Z(G)$-orbit on ${\mathbb P}(V)$ 
we must enumerate $1-|G|/(2\cdot|{\mathbb P}(V)|)\sim 87\%$ of it. 
With a random search we find representatives 
of $75$ $G$-orbits covering $\sim 90\%$ of ${\mathbb P}(V)$
without detecting a regular $G/Z(G)$-orbit.

But there might exist $v\in V$
having stabiliser $C_G(v)=\{1\}$ such that the $1$-space 
$\langle v\rangle\in{\mathbb P}(V)$ has non-trivial stabiliser
$C_G(\langle v\rangle)\cong{\mathbb F}_7^\ast$. 
Hence we must also exclude $G$-orbits of length $|G|/6$. To do this 
by an exhaustive enumeration of ${\mathbb P}(V)$, we must cover 
$1-|G|/(6\cdot|{\mathbb P}(V)|)\sim 96\%$ of the space.
The Cauchy-Frobenius Lemma shows that there are 
$382$ $G$-orbits on ${\mathbb P}(V)$, some of which may 
escape a random search.

Hence we proceed differently.
If $v$ is as above, then 
$C_G(\langle v\rangle)=\langle tz\rangle$, 
where $t\in G$ has order $3$, and $1\neq z\in Z(G)$. Now $v$ is an 
eigenvector for $t$ with respect to an eigenvalue $\omega$, where
$\omega\in\mathbb{F}_7^\ast$ is a primitive third root of unity.
By \cite{Atlas}, there are precisely four conjugacy classes of elements of
order $3$ in $G$; see Table \ref{tbl:co1cen}, where the associated
centralisers and the dimension $d_\omega(t)$ of the eigenspace
$E_\omega(t)$ of $t$ with respect to the eigenvalue $\omega$ are given.

%%%%%%%%%%%%%%%%%%%%%%%%%%%%%%%%%%%%%%%%%%%%%%%%%%%%%%%%%%%%%%%%%%%%%%%%%%%
\begin{table}[!ht]\renewcommand{\baselinestretch}{1.1}\selectfont
\centering
$\begin{array}{ccrr}
\hline 
t & d_\omega(t) & C_G(t) & |C_G(t)| \\
\hline
3A & 12 & 6.\text{Suz}               & 2\,690\,072\,985\,600 \\
3B &  6 & 2.(3^2.U_4(3).2)           &         117\,573\,120 \\
3C &  9 & 2.(3^{1+4}{:} 2.S_4(3)) &          25\,194\,240 \\ 
3D &  8 & 2.A_9 \times 3             &           1\,088\,640 \\
\hline
\end{array}$
\caption{$3$-centralisers in $G$}\label{tbl:co1cen}
\end{table}
%%%%%%%%%%%%%%%%%%%%%%%%%%%%%%%%%%%%%%%%%%%%%%%%%%%%%%%%%%%%%%%%%%%%%%%%%%%

Since the conjugacy classes under consideration are rational, 
the normaliser $N_G(\langle t\rangle)$ interchanges the eigenspaces 
$E_\omega(t)$ and $E_{\omega^2}(t)$, so it suffices to consider
either of the primitive third roots of unity in $\mathbb{F}_7^\ast$.
Now $C_G(t)$ acts on $E_\omega(t)$, and hence 
$C_G(t)/\langle tz\rangle$ acts on ${\mathbb P}(E_\omega(t))$. 
Since $C_{C_G(t)}(\langle v\rangle)=C_G(\langle v\rangle)=\langle tz\rangle$, 
we conclude that $\langle v\rangle$ belongs to a regular 
$C_G(t)/\langle tz\rangle$-orbit on ${\mathbb P}(E_\omega(t))$.

For conjugacy classes $3A$ and $3B$, we check that
$|{\mathbb P}(E_\omega(t))|=(7^{d_\omega(t)}-1)/6<|C_G(t)|/6$,
hence there cannot be a regular $C_G(t)/\langle tz\rangle$-orbit.
To deal with classes $3C$ and $3D$, 
we pick a representative $t$, compute the action of $C_G(t)$ on 
$E_\omega(t)$, and enumerate ${\mathbb P}(E_\omega(t))$ 
completely by a standard orbit computation.  
For conjugacy class $3C$, none of the 
$21$ $C_G(t)/\langle tz\rangle$-orbits in ${\mathbb P}(E_\omega(t))$ 
is regular; for conjugacy class $3D$ precisely one of the $26$ 
relevant orbits is regular.
We pick $\langle v\rangle$ 
from this unique regular $C_G(t)/\langle tz\rangle$-orbit on 
${\mathbb P}(E_\omega(t))$, and use {\sc Orb} 
to enumerate $51\%$ of the $G$-orbit of $\langle v\rangle$ in 
${\mathbb P}(V)$. This shows that
$C_G(\langle v\rangle)\cong Z(G)\times A_4$. 
Hence $b(G) = 2$.
\end{proof} 

%%%%%%%%%%%%%%%%%%%%%%%%%%%%%%%%%%%%%%%%%%%%%%%%%%%%%%%%%%%%%%%%%%%%%%%%%%%%%

\end{document}